\definecolor{webred}{cmyk}{0,1,1,.3}
\definecolor{webgreen}{cmyk}{1,0,1,.5}
\definecolor{webblue}{cmyk}{1,1,0,.5}
\def\E{\mathcal{E}}
\def\Sym{{\sf Sym}\,}
\def\soc{{\rm soc}\,}
\def\al{\alpha}
\renewcommand{\wr}{\,{\sf wr}\,}
\newcommand{\dih}[1]{{\sf D}_{#1}}
\newcommand{\alt}[1]{{\sf A}_{#1}}
\newcommand{\mat}[1]{{\sf M}_{#1}}
\newcommand{\pomegap}[2]{{\sf P}\Omega^+(#1,#2)}
\newcommand{\psl}[2]{\mbox{\sf PSL}({#1},#2)}
\newcommand{\pgl}[2]{\mbox{\sf PGL}({#1},#2)}
\newcommand{\psu}[2]{\mbox{\sf PSU}({#1},#2)}
\newcommand{\comp}[2]{#1^{(#2)}}
\renewcommand{\sp}[2]{{\sf Sp}(#1,#2)}
\newcommand{\ver}[1]{{\sf V}(#1)}
\newcommand{\ed}[1]{{\sf E}(#1)}
\newcommand{\aut}[1]{{\sf Aut}\,#1}
\newcommand{\kr}[2]{#1^{[1]}_{#2}}
\newcommand{\X}{\mathcal X}
\renewcommand{\S}{\mathcal S}
\newcommand{\gl}[2]{\mbox{\sf GL}(#1,#2)}
\newcommand{\SL}[2]{\mbox{\sf SL}(#1,#2)}
\newcommand{\pgammal}[2]{\mbox{\sf P$\Gamma$L}(#1,#2)}
\newcommand{\cd}[1]{{\sf CD}_{#1}}
\newcommand{\Z}{\mathbb Z}
\newcommand{\cent}[2]{{\sf C}_{#1}(#2)}
\newcommand{\norm}[2]{{\sf N}_{#1}(#2)}
\newcommand{\sym}[1]{{\sf Sym}\,#1}
\newcommand{\sy}[1]{{\sf S}_{#1}}
\newtheorem{theorem}{Theorem}[section]
\newtheorem{lemma}[theorem]{Lemma}
\newtheorem{corollary}[theorem]{Corollary}
\newtheorem{proposition}[theorem]{Proposition}
\newtheorem{claim}{Claim}
\theoremstyle{definition}
\newtheorem{hypothesis}[theorem]{Hypothesis}
\renewcommand{\leq}{\leqslant}
\renewcommand{\geq}{\geqslant}
\begin{document}
 
\title[Wreath products in product action and 2-arc-transitive graphs]
{Inclusions of innately transitive groups into 
wreath products in product action with applications to 
$2$-arc-transitive graphs}

\author{Cai-Heng Li}
\address[C. H. Li]{School of Mathematics and Statistics\\
The University of Western Australia\\
35 Stirling Highway 6009 Crawley\\
Western Australia\\
 cai.heng.li@uwa.edu.au \newline
www.maths.uwa.edu.au/$\sim$li}

\author{Cheryl E. Praeger}
\address[C. E. Praeger]{School of Mathematics and Statistics\\
The University of Western Australia\\
35 Stirling Highway 6009 Crawley\\
Western Australia\\
 cheryl.praeger@uwa.edu.au\newline
www.maths.uwa.edu.au/$\sim$praeger}

\author{Csaba Schneider}
\address[C. Schneider]{Departamento de Matem\'atica\\
Instituto de Ci\^encias Exatas\\
Universidade Federal de Minas Gerais\\
Av.\ Ant\^onio Carlos 6627\\
Belo Horizonte, MG, Brazil\\
csaba@mat.ufmg.br\\
 www.mat.ufmg.br/$\sim$csaba}

\begin{abstract}
We study $(G,2)$-arc-transitive graphs for innately transitive permutation 
groups $G$ such that $G$ can be embedded into a wreath product 
$\sym\Gamma\wr\sy\ell$ acting in product action on $\Gamma^\ell$. We find two
such connected
graphs: the first is Sylvester's double six graph with 36 vertices, while
the second is a graph with $120^2$ vertices whose automorphism
group is $\aut\sp 44$. We prove that under certain conditions no more
such graphs exist.
\end{abstract}

\maketitle
 
\section{Introduction}
Graphs and groups in the paper are finite; further our graphs are 
undirected, simple, and connected.
Graphs that satisfy certain symmetry type properties have been extensively 
studied, especially the class of 
$s$-arc-transitive graphs, for $s\geq 2$.
An $s$-arc in a graph $\Gamma$ is a sequence of vertices
$(v_0,\ldots,v_{s})$ such that $v_i$ and $v_{i+1}$ are neighbors
for all $0\leq i\leq s-1$ and $v_i\neq v_{i+2}$ for $0\leq i\leq s-2$. 
If $G$ is a subgroup of the automorphism group of $\Gamma$ that acts transitively on 
the set of $s$-arcs of $\Gamma$, then we say that $\Gamma$ is 
{\em $(G,s)$-arc-transitive}.
The graph $\Gamma$ is said to be {\em $s$-arc-transitive}, if it is 
$(\aut\Gamma,s)$-arc-transitive.
The values of $s$ for which this property can hold are very restricted:
by the celebrated results of Weiss and Tutte, if $\Gamma$ is an 
$s$-arc-transitive graph with valency at least 3, then $s\leq 7$~\cite{weiss}, 
while if $\Gamma$ is a cubic
graph then $s\leq 5$~\cite{tutte}.
The classification problem
of $s$-arc-transitive graphs has a long history; see for 
instance~\cite{seress} for an excellent survey.

In the theory of 2-arc-transitive graphs, normal quotients play an important
role. If $\Gamma$ is a $(G,2)$-arc-transitive graph, and $N$ is a
normal subgroup of $G$ that is intransitive on the vertex set of 
$\Gamma$, then we construct a quotient graph 
$\Gamma_N$ whose vertices are the $N$-orbits. If the original graph $\Gamma$ 
is not bipartite, then  
$\Gamma_N$ is
$(G/N,2)$-arc-transitive. Further, in this case, $\Gamma$ is a normal
cover of $\Gamma_N$; see~\cite[Theorem~4.1]{prae:quasi} or
\cite[Lemma~3.1]{seress}.
Hence it is important to study $(G,2)$-arc-transitive 
graphs that do not admit
 proper normal quotients.
In this case,
all non-trivial normal subgroups of $G$ 
are transitive and $G$ is said to be {\em quasiprimitive}; see~\cite{prae:quasi}.
Thus the $(G,2)$-arc-transitive graphs  for quasiprimitive
permutation groups $G$ can be considered as building blocks for 
finite, non-bipartite, 2-arc-transitive graphs, and they require
special attention. However, our understanding
 of the class of $(G,2)$-arc-transitive graphs
is still far from complete for a general quasiprimitive groups $G$. 

Quasiprimitive groups were classified by the second author~\cite{prae:quasi} 
via a type-analysis similar to the
O'Nan--Scott classification of primitive groups. 
She showed that if $\Gamma$ is a $(G,2)$-arc-transitive graph 
with a quasiprimitive group $G$, then the O'Nan--Scott type of $G$ 
is one of four types, denoted {\sc As}, HA, {\sc Pa} 
and {\sc Tw} (see~\cite{bad:quasi} for
the definitions of these O'Nan--Scott types).
All examples for type HA were determined in~\cite{iva-prae} 
and those of type TW were studied in~\cite{baddeleytw1}. 
In addition many examples of type {\sc As} are known; 
see, for example, \cite{ree,suzuki2at,psl2at}. On 
the other hand, it was not known for more than 20 years after the 
publication of~\cite{prae:quasi} whether there 
were examples of type {\sc Pa}. The first {\sc Pa} examples were constructed by 
Seress and the first author in~\cite{MR2258005}, 
and links between the {\sc Pa} and {\sc As} types were explored in a
recent paper; see~\cite[Theorem 1.3]{liseresssong}.

If a group $G$ of type PA is primitive, then the underlying 
set can be identified with a cartesian product 
$\Gamma^\ell$, for $\ell\geq 2$, in such a way that $G$ 
is a subgroup of the wreath product $W=\Sym 
\Gamma \wr \sy\ell$ acting in product action on $\Gamma^\ell$; 
moreover $G$ has socle $T^\ell$, for 
a non-abelian simple group $T$, and the component induced by $G$ on 
$\Gamma$ is primitive of 
AS type with socle $T$. 
In our terminology introduced in Section~\ref{sect:normalinc},
the embedding $G\leq W$ 
is a normal inclusion. It is 
possible, but not always the case, that an imprimitive quasiprimitive group 
$G$ has a normal inclusion 
into a wreath product in product action. However, 
for none of the five infinite families of 
examples of $(G,2)$-arc-transitive graphs in~\cite{MR2258005}, 
with $G$ quasiprimitive of type {\sc Pa}, does $G$ have such a normal inclusion. 

In this paper we seek, and give, answers to the following questions:

\begin{enumerate}
\item Are there any $(G,2)$-arc transitive graphs, 
with $G$ quasiprimitive of type {\sc Pa}, such that $G$
has a {\bf normal inclusion} in a wreath product in product action 
on the vertex set?
\item More generally, do there exist any $(G,2)$-arc transitive graphs 
with $G$ quasiprimitive such 
that $G$ can be embedded in a wreath product in product action on the 
vertex set?  In particular, is this true for any of the examples 
in~\cite{MR2258005}?
\end{enumerate}

Our results answer both questions, yet raise an interesting open problem. 
First we give a negative 
answer to Question 1 (Corollary~\ref{nonnormcor}), and 
in Section~\ref{sect:ls} we analyze the examples in~\cite{MR2258005} and prove 
(Corollary~\ref{cor:ls}) that for none of them does the group $G$ 
embed in a wreath product in product action. 
The latter conclusion relies on our general analysis of the situation for 
Question~2. Our main result 
Theorem~\ref{main} proves that there do indeed exist at least 
two examples, with each of these examples 
having type {\sc As}, that is the group $G$ is almost simple.

Using the terminology of~\cite{prae:incl,bad:quasi}, 
we say that the containment $G\leq W$ is an \emph{inclusion 
of $G$ into $W$}, and if $W=\Sym \Delta\wr S_\ell$ 
in product action on $\Delta^\ell$, then $W$ is the 
full stabilizer of the natural cartesian decomposition of 
$\Delta^\ell$ (see Section~\ref{sect:cs}), and hence a 
subgroup $G$ of $W$ also stabilizes this decomposition. 
Our analysis for Question 2 builds on the 
theory developed by the second and third authors in a
 series of papers~\cite{cs,MR2186989,MR2309892,MR2342458,MR2346470,embedding} 
with Baddeley 
to describe the cartesian decompositions stabilized by a 
given \emph{innately transitive} group, namely 
a permutation group with a transitive minimal normal subgroup $N$.  
Such a subgroup $N$ is called a 
\emph{plinth}.  Since each quasiprimitive group is 
innately transitive, we make our analysis in the 
broader context of innately transitive groups.  
Moreover, since the case of abelian plinths is completely 
settled in Proposition~\ref{lem6}
using the results in~\cite{iva-prae}, we usually
assume that the plinth is non-abelian. Further, 
since for quasiprimitive type {\sc Pa} the plinth is 
not regular, we assume this also.  The type $\cd{2\not\sim}$ 
for an inclusion $G\leq W$ is defined in Section~\ref{sect:transcd}.

\begin{theorem}\label{main}
Suppose that $\Delta$ is a finite set with $|\Delta|\geq 2$ 
and let $\ell\geq 2$. 
Consider $W=\sym\Delta\wr\sy\ell$ as a permutation group acting 
in product action on $\Omega=\Delta^\ell$. Let $\Gamma$ be a connected 
$(G,2)$-arc-transitive 
graph with vertex set $\Omega$ for 
some innately transitive group $G\leq W$ with non-regular plinth $M$.
Then one of the following is valid.
\begin{enumerate}
\item $G$ is quasiprimitive of type {\sc As} and either
\begin{enumerate}
\item $M\cong\alt 6$, $|\aut \alt 6:G|\in\{1,2\}$,  $G\neq\pgl 29$,
$|\Omega|=6^2$, and $\Gamma$ has valency $5$; or
\item $M\cong\sp 44$, $G=\aut\sp 44$, $|\Omega|=120^2$
and $\Gamma$ is a graph with valency $17$.
\end{enumerate}
\item $M$ non-abelian and  non-simple, $G$ projects onto 
a transitive subgroup of $\sy\ell$ and 
the inclusion $G\leq W$ is of type $\cd {2\not\sim}$.
\end{enumerate}
\end{theorem}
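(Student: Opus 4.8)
The plan is to run the whole argument as a dichotomy on the structure of the plinth $M$. Since $M$ is non-abelian by assumption (the abelian case being settled in Proposition~\ref{lem6}), we may write $M\cong T^k$ for a non-abelian simple group $T$ and some $k\geq 1$, and the two branches $k=1$ and $k\geq 2$ will produce conclusions~(1) and~(2) respectively. When $k=1$ the socle $\soc G=T$ is the unique minimal normal subgroup and $\cent{G}{T}=1$, so every non-trivial normal subgroup of $G$ contains the transitive subgroup $T$ and is therefore transitive; hence $G$ is quasiprimitive of type {\sc As}, as required for~(1). When $k\geq 2$ we must instead extract the structural conclusions of~(2).

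For the almost simple branch, the key observation is that the inclusion $G\leq W$ in product action means precisely that $G$ stabilises the natural cartesian decomposition $\calE$ of $\Om=\Delta^\ell$ into $\ell\geq 2$ parts. I would feed this into the classification of cartesian decompositions stabilised by an almost simple group, developed in the series~\cite{cs,MR2186989,MR2309892,MR2342458,MR2346470,embedding}; this identifies $\calE$ with a (multiple) factorisation of $T$, pins down the number $\ell$ of parts, and describes the point stabiliser $G_\omega$ in terms of the factors. The $(G,2)$-arc-transitivity hypothesis then enters through its local form: the stabiliser $G_\omega$ must act $2$-transitively, in particular primitively, on the neighbourhood $\Gamma(\omega)$, and the valency is read off from the factorisation data.

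The main work of this branch is then to sieve the resulting candidates against the classification of factorisations of the almost simple groups, keeping only those for which a neighbourhood supporting a $2$-transitive local action exists. I expect this to collapse $\ell$ to $2$ and to leave exactly the two configurations asserted in~(1): the factorisation of $\alt 6$ through two conjugacy classes of $\alt5$-subgroups, giving $|\Om|=6^2$, valency~$5$ and Sylvester's double six graph, and the factorisation of $\sp44$ of degree $120$, giving $|\Om|=120^2$ and valency~$17$. The delicate points here are to verify that these two configurations genuinely yield connected $2$-arc-transitive graphs and, conversely, to exclude the near misses, in particular to show that $G=\pgl29$ must be discarded even though its socle $\alt6\cong\psl29$ is of the right isomorphism type; this is the principal obstacle of the whole proof and is where the explicit group-theoretic computation is concentrated.

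For the non-simple branch $k\geq 2$ I would first show that $G$ projects onto a transitive subgroup of $\sy\ell$. If not, the $\ell$ coordinates split into several $G$-invariant blocks, yielding a coarser $G$-invariant cartesian decomposition of $\Om$; one then checks that a connected $2$-arc-transitive graph on such a product forces the local action of $G_\omega$ to be imprimitive, contradicting the primitivity implied by local $2$-transitivity. With transitivity on $\sy\ell$ in hand, I would invoke the same cartesian-decomposition machinery, now in the form classifying the types available to an innately transitive group with non-simple non-abelian plinth, and eliminate every type except $\cd{2\not\sim}$: in all other types the neighbourhood $\Gamma(\omega)$ inherits a non-trivial product structure on which no subgroup of $G_\omega$ can act $2$-transitively. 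The surviving type $\cd{2\not\sim}$ is exactly conclusion~(2), and the fact that no graph is actually produced here is what leaves the open problem flagged in the introduction.
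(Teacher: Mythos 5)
Your overall architecture is the same as the paper's: the dichotomy simple plinth versus non-simple non-abelian plinth, producing conclusions (1) and (2) respectively, and your simple-plinth branch (restrict $T$ and $T_\omega$ via the factorization/cartesian-decomposition classification, then sieve by the requirement that $G_\omega$ act $2$-transitively on $\Gamma(\omega)$, with explicit computation to confirm the $\alt 6$ and $\sp 44$ examples and to exclude $\pgl 29$) is essentially the paper's Theorem~\ref{simpleplinth}. The non-simple branch, however, has a genuine gap. Your elimination of intransitive $G\pi$ and of the types $\cd{1S}$, $\cd{2\sim}$, $\cd 3$ rests on a single claim: that in all these cases $\Gamma(\omega)$ inherits a non-trivial product structure incompatible with local $2$-transitivity. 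That mechanism is available only when $M_\omega$ is an $\S$-subgroup of $M$ (the cases $X=Y$ of Theorem~\ref{c2}(4)): there Theorem~\ref{constrnormal} yields a normal inclusion $G\leq\sym\Xi\wr\sy{k_1}$, Proposition~\ref{grnormal} shows $\Gamma\cong(\Gamma_1)^{k_1}$ is a direct power, and $2$-transitivity fails on the cartesian-power neighborhood (Corollary~\ref{nonnormcor}). It is simply false in the exceptional cases of the classification, listed in Table~\ref{exceptions}: $T\cong\sp 4{2^a}$, $a\geq 2$, for intransitive $G\pi$ and for types $\cd{2\sim}$ and $\cd{1S}$, and $T\cong\sp 62$ for type $\cd 3$. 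In those cases one only knows $(\dih{q+1})^{k_1}\leq M_\omega\leq(\dih{q+1}\cdot 2)^{k_1}$, the stabilizer $M_\omega$ need not be an $\S$-subgroup, no normal inclusion is guaranteed, the graph need not be a direct power, and the neighborhood carries no visible product structure. Your proposed argument therefore does not close the dichotomy.

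The paper devotes most of the proof of Theorem~\ref{nonsimple} precisely to these residual cases, by an argument of a different nature which your plan would need to reproduce: the common Hall $2'$-subgroup $C\cong(C_{q+1})^{k_1}$ of $M_\alpha$ is normal in $G_\alpha$; since $M_\alpha$ is solvable, normal in $G_\alpha$, and transitive on $\Gamma(\alpha)$ (Lemma~\ref{marctrans}), the local $2$-transitive group $G_\alpha^{\Gamma(\alpha)}$ must be affine; because $G_\alpha$ permutes the $k_1$ direct factors of $C$, any minimal normal subgroup of $G_\alpha^{\Gamma(\alpha)}$ arising from $C$ would be of the form $(C_r)^{k_1}$ acted on monomially, and a monomial linear group is never transitive on the nonzero vectors, so $C$ lies in the kernel $\kr G\alpha$ of the local action; finally connectivity propagates this to every vertex, forcing $C$ to fix $\Omega$ pointwise, a contradiction. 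Two smaller points: your treatment of intransitive $G\pi$ (a coarser invariant decomposition forcing local imprimitivity) is an unsupported heuristic, whereas the paper even here relies on the classification (Theorem~\ref{c2}(1) caps the number of $G\pi$-orbits at two and restricts $T$) followed by the two mechanisms above; and in the $k=1$ branch your assertion that $\cent GT=1$ is not automatic for an innately transitive group --- that is exactly the difference between innately transitive and quasiprimitive --- so quasiprimitivity of type {\sc As} should be deduced, as in the paper, from the final sieve (the surviving groups contain $\sy 6$ or equal $\aut\sp 44$), not assumed at the outset.
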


There is a unique example for each part of Theorem~\ref{main}(1).
These  two graphs are, to our knowledge,
the first known connected $(G,2)$-arc transitive graphs
with $G$ quasiprimitive and
preserving a cartesian decomposition of the vertex set.
The graph in part~(a) is 
known as Sylvester's Double Six Graph 
(see~\cite[13.1.2~Theorem]{distancebook}). Both  graphs in 
part~(1) can be 
constructed using the generalized quadrangle associated with 
the non-degenerate alternating bilinear form preserved by $\sp 4q$
with $q=2,\ 4$.  
On the other hand, the existence of examples in part (2) is unresolved.

\medskip
\noindent{\bf Problem.}  Does there exist a $(G,2)$-arc transitive 
graph $\Gamma$ for which Theorem~\ref{main}(2) holds?

\medskip

The class of inclusions of type $\cd{2\not\sim}$ is discussed in 
Section~\ref{sect:transcd}. Of the six types of inclusions 
characterized there, it is the one for which least information is available. 
So it is not surprising that for 
these inclusions our results are, unfortunately, inconclusive.  
The theory of cartesian decompositions invariant under the action of 
an innately transitive group is 
spread over several papers, and is presented in terms of 
cartesian systems of subgroups of the plinth. 
We found that the language of inclusions into wreath products 
was better suited for our study of 2-arc-transitive graphs, 
and we believe that this is probably true also for 
other applications in algebraic 
combinatorics and graph theory.  For this reason we summarize the most 
important aspects of the 
theory in Section~\ref{sect:cs} using the language of inclusions.  
In particular, if $G\leq W = \Sym \Delta \wr \sy\ell$, 
acting in product action on $\Delta^\ell$, with $G$ innately transitive, 
then the image $G\pi$ under the natural 
projection map $\pi:W\rightarrow \sy\ell$ has at most two orbits 
on $\underline \ell=\{1,\ldots,\ell\}$. 
The groups
that have two orbits can be characterized, and the groups that 
are transitive on $\underline\ell$ are classified into six~classes. 
 For all classes except for $\cd{2\not\sim}$, the class that appears in 
Theorem~\ref{main}(2), 
the information that we have about $G$ is so strong that
its permutational isomorphism type can be described in quite precise details
 (see Theorem~\ref{c2}).
The results from Section~\ref{sect:cs} are 
applied in Sections~\ref{cdgraphs}--\ref{sect:ls} 
to prove Theorem~\ref{main} and to analyze the graphs in~\cite{MR2258005}. 
We believe that this 
summary will make the results more accessible for other researchers. 
For example, the study by Morris 
and Spiga in~\cite{joyspiga1} 
significantly generalized results about distance-transitive graphs 
to $(G,\Lambda)$-transitive digraphs  for which the group $G$ 
is innately transitive and has a normal inclusion into a wreath 
product in product action. Perhaps using the framework we
present in Section~\ref{sect:cs}, their results could be further
extended to describe not necessarily normal inclusions for such graphs.

\section{Inclusions of innately transitive groups into wreath products}
\label{sect:cs}

We introduce notation that we will 
keep throughout the section. For a natural number $n$, the symbol
$\underline n$ denotes the set $\{1,\ldots,n\}$. We will work under the following 
hypothesis.
\begin{hypothesis}\label{hyp1}
Suppose that $\Delta$ is a finite set with 
$|\Delta|\geq 2$, $\ell\geq 2$, 
 and let $\Omega=\Delta^\ell$. 
Consider $W=\sym\Delta\wr\sy\ell=(\sym\Delta)^\ell\rtimes\sy\ell$ 
as a permutation group acting in product
action on $\Omega$. Suppose that $G$ is subgroup of $W$ and $M$ is 
a minimal normal subgroup of $G$ that is transitive on $\Omega$. Let 
$M=T_1\times\cdots\times T_k\cong T^k$ where the $T_i$ and $T$ are
finite simple groups.
\end{hypothesis}

By the definition introduced in the introduction,
$G$ is an innately transitive group on $\Omega$ with plinth $M$.
For $(\delta_1,\ldots,\delta_\ell)\in\Omega$, $g_1,\ldots,g_\ell\in
\sym\Delta$ and $h\in\sy\ell$ the product action of $(g_1,\ldots,g_\ell)h\in W$
on $(\delta_1,\ldots,\delta_\ell)$ can be written as
$$
(\delta_1,\ldots,\delta_\ell)(g_1,\ldots,g_\ell)h=(\delta_{1h^{-1}}g_{1h^{-1}},
,\ldots,\delta_{\ell h^{-1}}g_{\ell h^{-1}}).
$$
Inclusions of primitive and quasiprimitive groups into wreath 
products in product action were described in~\cite{prae:incl,bad:quasi}.
Innately transitive subgroups
of wreath products in product action were studied 
in~\cite{cs,MR2186989,MR2309892,MR2342458,MR2346470}, 
by analyzing cartesian decompositions. A cartesian decomposition of $\Omega$
is a set $\E=\{\Delta_1,\ldots,\Delta_\ell\}$ of partitions of $\Omega$ such that
$$
|\delta_1\cap\cdots\cap\delta_\ell|=1\quad\mbox{for all}
\quad\delta_1\in\Delta_1,\ldots,\delta_\ell\in\Delta_\ell.
$$
In our case $W$ is the full stabilizer in $\sym\Omega$ of the
{\em natural
cartesian decomposition} $\E$ of $\Omega$ where $\E$ is defined by
$$
\E=\{\Delta_i\mid i\in\underline\ell\}
\quad\mbox{where}\quad \Delta_i=\{\{(\delta_1,\ldots,\delta_\ell)\mid
\delta_i=\delta\}\mid\delta\in\Delta\}
$$
(see~\cite[Section~1.1]{embedding}).
The cartesian decomposition $\E$ contains $\ell$ partitions
of $\Omega$, and 
$(\delta_1,\ldots,\delta_\ell)$ and 
$(\gamma_1,\ldots,\gamma_\ell)$ belong to the same block of $\Delta_i$ if and only 
if $\delta_i=\gamma_i$. The group $W$ preserves $\E$ in the sense that
it permutes the partitions in $\E$. 

The aim of this section is to combine the theory of cartesian decompositions
with the Embedding Theorem in~\cite{embedding} to 
derive useful facts on the inclusion of $G$ into $W$ that can be used
in the characterization of $(G,2)$-arc-transitive graphs in 
Sections~\ref{cdgraphs}--\ref{sec:nonsimple}.
The main results do not refer to $\E$ directly,
but the fact that $W$ and $G$ preserve $\E$
plays a central role in their proofs. For the key assertions we
give references to the appropriate results in the papers cited above.

Suppose that $\pi:W\rightarrow\sy\ell$ 
is the natural projection and consider $\pi$ as a permutation 
representation of $W$ on $\underline\ell$. The kernel of $\pi$ is the
base group $(\sym\Delta)^\ell$ of the wreath product $W$.
If $i,\ j\in\underline\ell$, then
we have, for all $w\in W$, that $i(w\pi)=j$ if and only if 
$\Delta_iw=\Delta_j$. Thus the $W$-actions on $\underline\ell$ and 
on $\E$ are permutationally equivalent. This permutational equivalence
implies that the
pointwise stabilizer in $W$ of $\E$ is the base group $(\sym\Delta)^\ell$ of 
the wreath product. Combining this with~\cite[Proposition~2.1]{cs} 
gives the following.

\begin{lemma}\label{minbase}
Assuming Hypothesis~\ref{hyp1}, $M\leq(\sym\Delta)^\ell$. 
\end{lemma}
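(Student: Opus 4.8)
The plan is to prove the containment by showing that $M$ lies in the kernel of the projection $\pi$. In view of the permutational equivalence noted just above between the $W$-actions on $\underline\ell$ and on $\E$, and of the fact that the pointwise stabiliser in $W$ of $\E$ equals the base group $(\sym\Delta)^\ell$, the desired conclusion $M\leq(\sym\Delta)^\ell$ is equivalent to saying that $M$ fixes every partition $\Delta_i\in\E$, that is, that $M\pi=1$. So I would reduce the lemma to proving $M\pi=1$.

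First I would exploit the minimality of $M$. Set $K=M\cap(\sym\Delta)^\ell$. Since $M\leq G$ we have $K=M\cap\bigl((\sym\Delta)^\ell\cap G\bigr)$, and because $(\sym\Delta)^\ell$ is normal in $W\geq G$ while $M$ is normal in $G$, the subgroup $K$ is the intersection of two normal subgroups of $G$ and hence is itself normal in $G$. As $K\leq M$ and $M$ is a minimal normal subgroup of $G$, minimality forces $K=1$ or $K=M$. In the second case $M\leq(\sym\Delta)^\ell$ and the lemma holds, so the entire content is to exclude the first case $K=1$, in which $\pi$ restricts to an isomorphism of $M=T_1\times\cdots\times T_k$ onto a subgroup $M\pi$ of $\sy\ell$ that permutes the partitions of $\E$ with trivial kernel.

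The step I expect to be the main obstacle is precisely this elimination of the case $K=1$. A crude order estimate does not suffice: transitivity of $M$ on $\Om$ yields only $|M|\geq|\Delta|^\ell\geq 2^\ell$, whereas a faithful image in $\sy\ell$ yields only $|M|\leq\ell!$, and these bounds are compatible once $\ell$ is moderately large. What genuinely rules out $K=1$ is the interaction between the transitivity of $M$ and the product-action structure: were $\pi|_M$ faithful, the orbits of $M\pi$ on $\underline\ell$ would coalesce the partitions of $\E$ into coarser $M$-invariant partitions of $\Om$ on whose blocks $M$ would still act faithfully and transitively, and the cartesian-decomposition theory shows that a transitive minimal normal subgroup cannot be spread nontrivially across the coordinate partitions in this way. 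This is exactly the conclusion furnished by \cite[Proposition~2.1]{cs}, which I would invoke at this point; combined with the identification of the pointwise $\E$-stabiliser as the base group it gives $M\pi=1$, and hence $M\leq(\sym\Delta)^\ell$.
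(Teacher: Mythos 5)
Your proposal is correct and takes essentially the same route as the paper: both arguments reduce the lemma to showing that $M$ fixes every partition in $\E$ (using that the pointwise stabiliser of $\E$ in $W$ is the base group $(\sym\Delta)^\ell$), and both rely on \cite[Proposition~2.1]{cs} for the decisive step. Your detour through the dichotomy $K=1$ or $K=M$ for $K=M\cap(\sym\Delta)^\ell$ is valid but superfluous, since Proposition~2.1 already gives $M\pi=1$ (hence $K=M$) directly.
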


In~\cite{embedding}, 
we defined, for $j\in\underline\ell$, the $j$-th component of 
the group $G$. 
Let us express this component in terms of the inclusion
$G\leq W$.
Let $W_j$ denote the stabilizer of $j$ under $\pi$.
The group $W_j$ can be written as a
direct product 
\begin{equation}\label{Wjdecomp}
W_j=\sym\Delta\times(\sym\Delta\wr\sy{\ell-1})
\end{equation} 
where
the first factor acts on the $j$-th coordinate of $\Omega=\Delta^\ell$ and
the second factor acts on the other $\ell-1$ coordinates. In particular
`$\sy{\ell-1}$' is meant to be the stabilizer of $j$ in $\sy\ell$. We define
the $j$-th component $\comp Gj$ of $G$ as the projection of $G_j=G\cap W_j$ 
into the first factor of the direct product decomposition of $W_j$
given in~\eqref{Wjdecomp}.
It is easy to see that this definition of the component is equivalent 
to the one in~\cite[Section~1.2]{embedding}.

As both $G$ and $M$ are transitive on $\Omega$, 
the following is a direct consequence of Theorems~1.1--1.2 of~\cite{embedding}.

\begin{theorem}\label{comps}
Suppose that Hypothesis~\ref{hyp1} is valid and let $\omega\in\Omega$.
\begin{enumerate}
\item The components $\comp Gj$  and $\comp Mj$  are transitive on $\Delta$
for all $j\in\underline\ell$.
\item There is an element $x\in(\sym\Delta)^\ell$ stabilizing $\omega$ such that 
the components of $G^x$ are constant on each $G\pi$-orbit in $\underline\ell$.
\item If $G\pi$ is transitive on $\underline\ell$ then the element
$x$  in part~(2) can be chosen so that $G^x\leq \comp G1\wr(G\pi)$. 
\end{enumerate}
\end{theorem}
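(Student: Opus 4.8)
The plan is to obtain the three assertions from the Embedding Theorem of \cite{embedding} (Theorems~1.1--1.2 there), after verifying that the present set-up is exactly the one to which that theorem applies. Two preliminary matters must be settled. First, the component $\comp Gj$ defined here through the decomposition~\eqref{Wjdecomp} of $W_j$ must agree with the component of \cite[Section~1.2]{embedding}; this equivalence was noted just before the statement, so it may be taken for granted. Second, the hypotheses of the Embedding Theorem must hold, and this is immediate: by Hypothesis~\ref{hyp1} both $G$ and its plinth $M$ are transitive on $\Omega=\Delta^\ell$ and $W=\sym\Delta\wr\sy\ell$ acts in product action, which is precisely the required input.

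For part~(1) I would give a direct argument, bypassing \cite{embedding}. By Lemma~\ref{minbase} the plinth satisfies $M\leq(\sym\Delta)^\ell$, so every element of $M$ fixes every index $j$ under $\pi$; hence $M\cap W_j=M$ and $\comp Mj$ is simply the projection of $M$ onto the $j$-th coordinate. The projection of a transitive subgroup of a product onto any coordinate is transitive, so $\comp Mj$ is transitive on $\Delta$. Since $M\leq G$ gives $\comp Mj\leq\comp Gj$, transitivity of $\comp Mj$ forces $\comp Gj$ to be transitive as well. This proves~(1) and also supplies the transitivity of components that the remaining parts exploit.

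For part~(2) the key observation is that if $i,j$ lie in the same $G\pi$-orbit then $\comp Gi$ and $\comp Gj$ are conjugate in $\sym\Delta$: an element $g\in G$ with $i(g\pi)=j$ conjugates $G_i$ onto $G_j$ and, on coordinates, intertwines the two component actions by one of its entries. A direct computation then shows that conjugating $G$ by a base-group element $x=(x_1,\dots,x_\ell)$ replaces each component $\comp Gj$ by $(\comp Gj)^{x_j}$. I would therefore fix a representative $j_0$ in each $G\pi$-orbit and, for every $j$ in that orbit, choose $x_j$ conjugating $\comp Gj$ to $\comp G{j_0}$ (taking $x_{j_0}=1$), so that all components in the orbit become equal to $\comp G{j_0}$. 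The one extra requirement is that $x$ fix $\omega$, i.e.\ that each $x_j$ fix $\omega_j$; this is arranged using part~(1), because the conjugators carrying $\comp Gj$ to $\comp G{j_0}$ form a coset of $\norm{\sym\Delta}{\comp G{j_0}}$, and since $\comp G{j_0}$ is transitive its normalizer is transitive, so the coset contains an element fixing $\omega_j$.

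For part~(3), when $G\pi$ is transitive there is a single orbit, so by~(2) all components of $G^x$ coincide with $\comp G1$ (taking $j_0=1$, $x_1=1$), and the top component of any element of $G^x$ lies in $G\pi$; it remains to show that \emph{every} coordinate entry of \emph{every} element of $G^x$ lies in $\comp G1$, equivalently $G^x\leq\comp G1\wr(G\pi)$. This last step is the main obstacle. Constancy of components alone does not suffice: for $w\in G^x$ sending index $j$ to $j'$ one only obtains that the $j$-th entry lies in a fixed coset $\comp G1\cdot(w_0)_j$, and collapsing this coset to $\comp G1$ genuinely uses the transitivity of the plinth $M$ on $\Omega$. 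Indeed, a small cyclic example (a group generated by a single element swapping two coordinates, with constant transitive components but not transitive on $\Omega$) fails the containment, so the global transitivity cannot be dropped. This collapsing is exactly the content of the Embedding Theorem of \cite{embedding}, so at this point I would invoke Theorems~1.1--1.2 there; reproducing their argument in the present language is the only substantial work, everything else reducing to the bookkeeping above.
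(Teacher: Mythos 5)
Your proposal is correct, and at the decisive step it follows the paper's route: the paper's entire proof of this theorem is the one-sentence observation that, since both $G$ and $M$ are transitive on $\Omega$, the statement is a direct consequence of Theorems~1.1--1.2 of \cite{embedding}, the definition of component via~\eqref{Wjdecomp} having already been matched with the one in \cite[Section~1.2]{embedding}. Where you genuinely differ is that you reserve that citation for part~(3) only and prove parts~(1) and~(2) by hand, and both of your direct arguments are sound: for~(1), Lemma~\ref{minbase} places $M$ in the base group, so $\comp Mj$ is the $j$-th coordinate projection of a group transitive on $\Delta^\ell$ and is therefore transitive on $\Delta$, whence so is $\comp Gj\geq\comp Mj$; for~(2), conjugation by a base-group element $x=(x_1,\ldots,x_\ell)$ replaces $\comp Gj$ by $(\comp Gj)^{x_j}$, components along one $G\pi$-orbit are $\sym\Delta$-conjugate via coordinate entries of elements of $G$, and the conjugators carrying $\comp Gj$ to $\comp G{j_0}$ form a coset of $\norm{\sym\Delta}{\comp G{j_0}}$, which contains the transitive group $\comp G{j_0}$ and hence meets the stabilizer of $\omega_j$. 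Your version buys transparency: it exhibits (1) and (2) as elementary bookkeeping and isolates (3) as the only place where transitivity on all of $\Omega$ is indispensable. Your caveat there is also correct: constancy of transitive components does not by itself force $G^x\leq\comp G1\wr(G\pi)$; for instance, with $\Delta=\{0,1,2,3,4\}$, $c:\delta\mapsto\delta+1$, $a:\delta\mapsto 2\delta \pmod 5$ and $\sigma$ the coordinate swap, the group generated by $(a,a^{-1}c)\sigma$ has both components equal to $\langle c\rangle$ yet its entries do not lie in $\langle c\rangle$, and indeed it has order $10$, so it is not transitive on $\Delta^2$. What the paper's wholesale citation buys, in turn, is brevity, and the cited theorems also deliver for free the two refinements you had to arrange manually, namely that $x$ can be chosen to stabilize $\omega$ and that constancy holds simultaneously on every $G\pi$-orbit.
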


The inclusion of $G$ into $W$ can be further studied by understanding the
components of $M$. When $M$ is abelian, then $G$ is primitive of HA type.
The fact that this case does not occur for a $(G,2)$-arc-transitive graph
is proved in Proposition~\ref{lem6} using the tools developed for primitive
inclusions in~\cite{prae:incl}. Thus from now on we assume that 
$M$ is non-abelian.

\subsection{Non-abelian $\boldsymbol M$}
Suppose that $M$ is non-abelian. In this case, $M=T^k=T_1\times
\cdots\times T_k$ where the $T_i$ are the minimal normal subgroups of $M$. 
The $j$-th component $\comp Mj$ is, by definition, a quotient of $M$. 
Suppose that $\overline{\comp Mj}$ is the kernel of the natural 
projection
$M\rightarrow \comp Mj$. Then $\overline{\comp Mj}$ is the direct product of 
some of the $T_i$. We will identify $\comp Mj$ with the product of the
$T_i$ such that $T_i\not\leq \overline{\comp Mj}$. Hence, 
$M=\overline{\comp Mj}\times \comp Mj$ and $\comp Mj$ acts faithfully
on $\Delta$ viewed as the $j$-th coordinate of $\Delta^\ell$. 
Let $T$ be a simple factor of $M$ 
and set $\overline T=\cent MT$, so that $M=T\times\overline T$.
If $H$ is a subgroup of $M$, then 
$H\overline T/\overline T$ is the projection of $H$ into $T$, and we identify
this quotient with a subgroup of $T$.
The following lemma states important factorization properties of
the projections $(\comp Mj)_\delta\overline T/\overline T$,
where $\delta\in\Delta$,
for those components  $(\comp Mj)$
that contain $T$.


\begin{lemma}\label{notmax}
Suppose that Hypothesis~\ref{hyp1} holds, that $M$ is non-abelian, and that $T$ is
a simple factor of $M$. Suppose that 
$T$ is contained in the components $\comp M{j_1},\ldots, \comp M{j_s}$. Let $\delta\in\Delta$, set $\omega=(\delta,\ldots,\delta)$, and 
for $r\in\underline s$,
let $A_{r}=(\comp M{j_r})_\delta\overline T/\overline T$.  
Then 
$$
A_r\left(\bigcap_{i\neq r}A_i\right)=T\mbox{ for all $r$ and }
M_\omega\overline T/\overline T\leq \bigcap_{r\leq s}A_r.
$$
\end{lemma}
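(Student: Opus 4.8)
The plan is to reduce both assertions to the transitivity of $M$ on $\Omega$, translated into a single group factorization, together with the fact that projection onto $T$ is a surjective homomorphism. Write $\phi\colon M\to M/\overline T$ for the natural projection and identify $M/\overline T$ with $T$, so that $\phi(H)=H\overline T/\overline T$ for every subgroup $H\leq M$ and $\phi(XY)=\phi(X)\phi(Y)$ for all subgroups $X,Y$. For each $j\in\underline\ell$ let $K_j\leq M$ be the preimage, under projection to the $j$-th coordinate, of the stabilizer $(\comp Mj)_\delta$; thus $K_j$ is the stabilizer in $M$ of the value $\delta$ in coordinate $j$. Since $M\leq(\sym\Delta)^\ell$ by Lemma~\ref{minbase}, an element of $M$ fixes $\omega=(\delta,\ldots,\delta)$ exactly when it lies in every $K_j$, so $M_\omega=\bigcap_{j\in\underline\ell}K_j$.

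First I would record the structure of $K_j$ and compute its image under $\phi$. Because $\overline{\comp Mj}$ acts trivially on the $j$-th coordinate while $\comp Mj$ acts faithfully there, one has $K_j=\overline{\comp Mj}\times(\comp Mj)_\delta$. For $j=j_r$ with $r\in\underline s$ we have $T\leq\comp M{j_r}$, so the complement $\overline{\comp M{j_r}}$ is a product of simple factors other than $T$ and hence lies in $\cent MT=\overline T$; applying $\phi$ then gives $\phi(K_{j_r})=\phi((\comp M{j_r})_\delta)=A_r$. With this identification the second assertion is immediate: since $M_\omega\leq K_{j_r}$ for each $r\in\underline s$, we get $\phi(M_\omega)\leq\phi(K_{j_r})=A_r$, whence $M_\omega\overline T/\overline T=\phi(M_\omega)\leq\bigcap_{r\leq s}A_r$.

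The heart of the argument is the first factorization. The key step is to extract from the transitivity of $M$ on $\Omega$ the relation $K_{j_r}\bigl(\bigcap_{i\neq j_r}K_i\bigr)=M$, the intersection running over $i\in\underline\ell\setminus\{j_r\}$. This holds because $\bigcap_{i\neq j_r}K_i$ is transitive on the $j_r$-th coordinate: given any target value, transitivity of $M$ supplies an element carrying $\omega$ to the point that has that value in coordinate $j_r$ and $\delta$ in every other coordinate, and such an element automatically lies in $K_i$ for all $i\neq j_r$. Applying $\phi$ yields $T=\phi(M)=A_r\,\phi\bigl(\bigcap_{i\neq j_r}K_i\bigr)$. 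Since the image of an intersection is contained in the intersection of the images, and since the listed indices $\{j_t:t\in\underline s,\ t\neq r\}$ form a subset of $\underline\ell\setminus\{j_r\}$, we get $\phi\bigl(\bigcap_{i\neq j_r}K_i\bigr)\leq\bigcap_{t\neq r}\phi(K_{j_t})=\bigcap_{t\neq r}A_t$. Combining the last two displays gives $T=A_r\,\phi\bigl(\bigcap_{i\neq j_r}K_i\bigr)\subseteq A_r\bigl(\bigcap_{i\neq r}A_i\bigr)\subseteq T$, forcing equality. I expect the only genuine obstacle to be the bookkeeping around this factorization: justifying the passage from transitivity to the factorization of $M$ (equivalently, recognising the $K_j$ as the cartesian system of $M$ determined by the decomposition $\Omega=\Delta^\ell$, which is where the hypotheses are really used) and tracking the projections of intersections carefully. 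Everything else is formal, relying only on $\phi$ being a surjective homomorphism.
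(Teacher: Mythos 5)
Your proof is correct and follows essentially the same route as the paper: both identify the block stabilizers $K_j=(\comp Mj)_\delta\times\overline{\comp Mj}$ (the cartesian system with respect to $\omega$) and obtain both assertions by applying the projection $M\to T$ to the equations $\bigcap_j K_j=M_\omega$ and $K_{j_r}\bigl(\bigcap_{i\neq j_r}K_i\bigr)=M$. The only difference is that the paper cites these equations as the defining properties of a cartesian system from earlier work, whereas you derive the key factorization directly from the transitivity of $M$ on $\Omega$, which makes the argument self-contained.
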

\begin{proof}
We introduced, in~\cite{cs}, 
a cartesian system of subgroups  associated with a cartesian 
decomposition. The cartesian system of subgroups with 
respect to $\omega=(\delta,\ldots,\delta)\in\Omega$ for the natural
cartesian decomposition $\E$ of $\Omega=\Delta^\ell$ is defined as 
$\{K_1,\ldots,K_\ell\}$ where $K_j$ is the stabilizer in $M$ of the set
$$
\{(\delta_1,\ldots,\delta_\ell)\mid\delta_j=\delta\}.
$$
Simple consideration shows that 
$K_j=(\comp Mj)_\delta\times\overline{\comp Mj}$.
Now the result follows by
applying the projection $M\rightarrow T$ to the equations in~\cite[Definition~1.3]{cs}.
\end{proof}
The information given by the previous lemma is very strong. 
Using the terminology of~\cite{bad:fact}, 
the set of proper subgroups among $A_1,\ldots,A_s$ forms a strong multiple 
factorization for the non-abelian finite simple group $T$.
The fact that we understand such factorizations in sufficient details leads
to the structure theorem in Section~\ref{sect:strth}.

\subsection{Normal inclusions}\label{sect:normalinc}
Assume that $M$ is non-abelian and identify 
each projection $\comp Mj$ with a normal 
subgroup of $M$ as above.
The inclusion $G\leq W$ is said to be {\em normal} if
$M=\prod_j \comp Mj$. 
Since our definition of the component $\comp Mj$ is equivalent to the
one given in~\cite{MR2309892},  an inclusion $G\leq W$ is normal  if and only if
the natural cartesian decomposition $\E$ of $\Delta^\ell$ is $M$-normal, as defined
in~\cite{MR2309892}. The inclusion $G\leq W$ is normal if and only if 
for all $T_i$ there is a unique $j$ such that $T_i\leq
\comp Mj$; that is, each $T_i$ acts trivially on all but one
coordinate of 
$\Delta^\ell$. The following proposition is a direct consequence 
of~\cite[Lemma~3.1]{MR2309892}.

\begin{proposition}\label{normalstab}
Suppose Hypothesis~\ref{hyp1} and let $\omega=(\delta,\ldots,\delta)$ with 
some $\delta\in\Delta$.
If $G\leq W$ is a normal inclusion, then $G\pi$ is transitive on 
$\underline\ell$ and $M_\omega= \prod_j (\comp Mj)_\delta$. 
\end{proposition}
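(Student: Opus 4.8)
The plan is to prove the statement in two parts, handling the transitivity of $G\pi$ first and then the factorization of the point stabilizer $M_\omega$. For the transitivity, I would exploit the normality assumption $M=\prod_j\comp Mj$ directly. Since $M$ is non-abelian and minimal normal in $G$, the group $G$ acts transitively by conjugation on the set of simple direct factors $\{T_1,\ldots,T_k\}$. The key observation is that, via the normality condition, each $T_i$ lies in a \emph{unique} component $\comp Mj$, so there is a well-defined map sending each $T_i$ to the index $j$ such that $T_i\leq\comp Mj$. I would argue that this assignment is compatible with the actions of $G$ on $\{T_1,\ldots,T_k\}$ and of $G\pi$ on $\underline\ell$: conjugation by $g\in G$ sends $T_i$ to another simple factor, and correspondingly permutes the components according to $g\pi$, because the decomposition $\E$ is preserved and the $W$-actions on $\underline\ell$ and on $\E$ are permutationally equivalent. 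Since $G$ is transitive on the simple factors (as $M$ is a minimal normal subgroup of $G$ that is a direct power of a non-abelian simple group), and every index $j\in\underline\ell$ is hit by at least one $T_i$ (because each $\comp Mj$ is transitive on $\Delta$ by Theorem~\ref{comps}(1), hence nontrivial, hence contains some $T_i$), the induced action of $G\pi$ on $\underline\ell$ must be transitive.

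For the factorization of the stabilizer, I would start from the normality condition $M=\prod_j\comp Mj$, which under the normal inclusion means $M$ is the \emph{internal direct product} of the components (each $T_i$ appearing in exactly one $\comp Mj$). This direct product structure is the crucial ingredient. With $\omega=(\delta,\ldots,\delta)$, the stabilizer $M_\omega$ is the intersection over all $j$ of the stabilizers of the $j$-th coordinate condition, which in the notation of Lemma~\ref{notmax} corresponds to $\bigcap_j K_j$. Using the computation in the proof of Lemma~\ref{notmax} that $K_j=(\comp Mj)_\delta\times\overline{\comp Mj}$, I would compute $M_\omega=\bigcap_j K_j$. Because $M$ decomposes as the direct product of its components, each element of $M$ decomposes uniquely into its components, and the stabilizer condition can be checked componentwise: an element $m=\prod_j m_j$ with $m_j\in\comp Mj$ fixes $\omega$ if and only if each $m_j$ fixes $\delta$ in the $j$-th coordinate. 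This yields $M_\omega=\prod_j(\comp Mj)_\delta$ directly.

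The main obstacle, and the step requiring the most care, is justifying that the stabilizer condition decouples across components and that no cross-terms interfere. This requires the product action formula together with the fact that $\comp Mj$ acts faithfully only on the $j$-th coordinate while acting trivially on the others — which is precisely what the normality condition guarantees (each $T_i$ acts trivially on all but one coordinate). I would verify this carefully using the product action formula stated in the excerpt: for $m=(m_1,\ldots,m_\ell)\in(\sym\Delta)^\ell$ (recalling $M\leq(\sym\Delta)^\ell$ by Lemma~\ref{minbase}), the action on $\omega=(\delta,\ldots,\delta)$ fixes $\omega$ exactly when $\delta m_j=\delta$ for each $j$. Rather than reprove these facts, I would invoke \cite[Lemma~3.1]{MR2309892}, to which the proposition is attributed as a direct consequence; my task is really to translate that lemma's conclusion into the component language established in this section. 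Thus the proof is short: establish the dictionary between the $M$-normality of $\E$ and our definition of normal inclusion (already done in Section~\ref{sect:normalinc}), and then cite \cite[Lemma~3.1]{MR2309892} to obtain both conclusions simultaneously.
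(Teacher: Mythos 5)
Your proposal is correct, and it actually does more than the paper: the paper's entire ``proof'' is the sentence preceding the proposition, which declares it a direct consequence of \cite[Lemma~3.1]{MR2309892}, relying on the dictionary (set up in Section~\ref{sect:normalinc}) between normal inclusions and $M$-normal cartesian decompositions --- exactly the fallback you describe in your last paragraph. Your first two paragraphs, however, constitute a genuinely self-contained alternative that the paper never writes down. Both halves of it are sound: for transitivity, the assignment $T_i\mapsto j$ (the unique $j$ with $T_i\leq\comp Mj$) is well defined by normality, surjective because each $\comp Mj$ is transitive on $\Delta$ with $|\Delta|\geq 2$ and hence nontrivial, and $G$-equivariant via the conjugation formula $(\comp Mj)^g=\comp M{j(g\pi)}$ in the wreath product, so transitivity of $G$ on $\{T_1,\ldots,T_k\}$ (which uses minimal normality of $M$ together with non-abelianness of $T$ --- legitimately available here, since normal inclusions are only defined for non-abelian $M$) pushes down to transitivity of $G\pi$ on $\underline\ell$; for the stabilizer, normality gives that each $\comp Mj$ acts trivially on every coordinate other than the $j$-th, so writing $m=\prod_j m_j$ and applying the product action formula decouples the condition $\omega m=\omega$ into $m_j\in(\comp Mj)_\delta$ for each $j$, yielding $M_\omega=\prod_j(\comp Mj)_\delta$. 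The trade-off: the citation route is one line and keeps the section consistent with the cartesian-system machinery of the earlier papers, while your direct argument makes the proposition independent of \cite{MR2309892}, needing only Lemma~\ref{minbase}, Theorem~\ref{comps}(1), and elementary wreath-product computations; if you write it up, the one step to spell out explicitly is the equivariance identity $(\comp Mj)^g=\comp M{j(g\pi)}$, which you currently justify only by appeal to the preservation of $\E$.
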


The following result gives a criterion to recognize
possible normal inclusions $G\leq W$.
 For a group 
$X$, a set $\X=\{X_1,\ldots,X_r\}$ of subgroups of $X$ is said to be 
a {\em direct decomposition} of $X$ if $X=X_1\times\cdots\times X_r$. A subgroup
$Y\leq X$ is said to be an {\em $\X$-subgroup}, if $\{Y\cap X_i\mid i\in\underline r\}$
is a direct decomposition of $Y$. When $G\leq W$ is a normal inclusion,
then $M_\omega\cap \comp Mj=(\comp Mj)_\delta$, and so 
Proposition~\ref{normalstab} states
that
$\X=\{\comp Mj\mid j\in\underline\ell\}$ is a direct decomposition of $M$ such
that $M_\omega$ is an $\X$-subgroup. We show a converse to this 
statement in  the next theorem.

\begin{theorem}\label{constrnormal}
Suppose that $G$ is a finite innately transitive permutation group 
acting on $\Omega$ with a non-abelian plinth $M$ and let $\omega\in\Omega$. 
Suppose that there exists a $G$-invariant 
direct decomposition $\X=
\{M_1,\ldots,M_r\}$ of $M$ such that $M_\omega$ is an $\X$-subgroup.
Let $\Xi$ be the right coset space $[M_1:M_1\cap M_\omega]$. Then 
there exists a monomorphism $\alpha:G\rightarrow \Sym\Xi\wr\sy r$,
where the wreath product is taken as a permutation group acting in product
action on $\Xi^r$,
in such a way that the inclusion $G\alpha\leq \Sym\Xi\wr\sy r$ is a normal inclusion.
\end{theorem}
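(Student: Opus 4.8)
The plan is to realise the permutation action of $G$ on $\Omega$ explicitly as a product action on $\Xi^r$ determined by the decomposition $\X$, and then to read off normality from the way the simple factors of $M$ distribute among the coordinates. First I would use the transitivity of the plinth to identify $\Omega$ with the right coset space $[M:M_\omega]$ as $M$-sets, sending $\omega^m$ to $M_\omega m$. Writing $L_i=M_\omega\cap M_i$, the hypothesis that $M_\omega$ is an $\X$-subgroup gives $M_\omega=L_1\times\cdots\times L_r$, while $\X$ being a direct decomposition gives $M=M_1\times\cdots\times M_r$. Taking the product of these two decompositions yields a natural bijection
$$
[M:M_\omega]\ \longleftrightarrow\ [M_1:L_1]\times\cdots\times[M_r:L_r],
$$
under which the $M$-action becomes the coordinatewise action of $M=\prod_iM_i$, with $M_i$ acting on the $i$-th factor and trivially on the others. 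This already exhibits $\Omega$ as a Cartesian product and identifies the $M$-action with a base-group action.

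Next I would pin down a single set $\Xi$ and identify every coordinate with it. Since $M$ is a minimal normal subgroup of $G$, the group $G$ permutes the simple factors $T_1,\ldots,T_k$ transitively; as $\X$ is $G$-invariant and each $M_i$ is a product of some of the $T_j$, this forces $G$ to act transitively on $\X$ and all the $M_i$ to be conjugate in $G$. Fixing $\Xi=[M_1:L_1]$ and choosing, for each $i$, an element of $G$ conjugating $M_1$ to $M_i$, I would transport the coset space $[M_1:L_1]$ onto $[M_i:L_i]$, which supplies the coordinate identifications $[M_i:L_i]\cong\Xi$ needed to view $\Omega$ as $\Xi^r$.

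The heart of the argument is to check that the full $G$-action, and not merely the $M$-action, has product-action form under these identifications. Writing $G=MG_\omega$, it suffices to analyse $G_\omega$: an element $g\in G_\omega$ normalises $M_\omega$, permutes the $M_i$ by some $\sigma\in\sy r$, and, because $L_i^g=(M_\omega\cap M_i)^g=M_\omega\cap M_{i\sigma}=L_{i\sigma}$, carries $[M_i:L_i]$ to $[M_{i\sigma}:L_{i\sigma}]$. Transporting through the chosen identifications shows that $g$ acts on $\Xi^r$ by permuting the coordinates via $\sigma$ and applying a bijection of $\Xi$ in each coordinate, which is precisely the shape of an element of $\Sym\Xi\wr\sy r$ in product action. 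Combining this with the base-group action of $M$ produces a homomorphism $\alpha\colon G\to\Sym\Xi\wr\sy r$, injective because $G$ already acts faithfully on $\Omega$. Normality is then immediate: the $i$-th component of $M\alpha$ is the image of $M_i$, which acts nontrivially only on the $i$-th coordinate, so each $T_j$ lies in exactly one component, and by the criterion for normal inclusions recorded before Proposition~\ref{normalstab} the inclusion $G\alpha\leq\Sym\Xi\wr\sy r$ is normal.

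The main obstacle I anticipate lies in the compatibility of the coordinate identifications with the $G_\omega$-action: the elements used to identify the coordinates with $\Xi$ need not lie in $G_\omega$, so some care is required to guarantee that the induced maps on $\Xi$ are genuine bijections that assemble into a single product action rather than merely a faithful action on $\Xi^r$. I expect this to be handled either by a direct cocycle-type adjustment of the identifications, or by appealing to the freedom in Theorem~\ref{comps}(2)--(3) to conjugate $G$ by a suitable element of the base group before reading off the embedding.
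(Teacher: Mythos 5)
Your construction is sound, but it takes a genuinely different, more self-contained route than the paper. The paper's proof is essentially a two-line citation: from the $G$-invariant direct decomposition $\X$ with $M_\omega$ an $\X$-subgroup it invokes Example~7.1 of~\cite{MR2186989} to produce an $M$-normal cartesian decomposition $\E'$ of $\Omega$ preserved by $G$, and then the Embedding Theorem of~\cite{embedding} to identify the full stabilizer of $\E'$ with $\Sym\Xi\wr\sy r$ in product action and to embed $G$ into it; normality of the inclusion is then immediate from the $M$-normality of $\E'$. Your argument carries out the same underlying mathematics by hand: your splitting $[M:M_\omega]\leftrightarrow[M_1:L_1]\times\cdots\times[M_r:L_r]$ is exactly the point-level description of that cartesian decomposition (the $i$-th partition being ``fix the $i$-th coset coordinate'', i.e.\ the orbits of $\prod_{j\neq i}M_j$), and your verification that $G$ acts in product-action form replaces the appeal to the Embedding Theorem. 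What the paper's route buys is brevity and consistency with the cartesian-decomposition formalism used throughout; what yours buys is an explicit, citation-free description of the monomorphism $\alpha$, at the cost of the bookkeeping you describe.

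The one obstacle you flag has a one-line resolution, and neither of your proposed remedies is the right one (in particular Theorem~\ref{comps}(2)--(3) presupposes an inclusion into a wreath product, which is precisely what you are constructing). Since $M$ is transitive on $\Omega$, we have $G=MG_\omega$, and since each $M_i$ is normal in $M$, the group $M$ fixes every member of $\X$ under conjugation; hence $G_\omega$ is already transitive on $\X$, and you may choose your conjugating elements $h_i$ with $M_1^{h_i}=M_i$ inside $G_\omega$. Then $L_1^{h_i}=(M_\omega\cap M_1)^{h_i}=M_\omega\cap M_i=L_i$ exactly, so conjugation by $h_i$ induces a genuine bijection $[M_1:L_1]\to[M_i:L_i]$, and the computation in your third paragraph (applied to $g\in G_\omega$, together with $G=MG_\omega$) goes through verbatim. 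Two further small points are worth making explicit. First, your normality criterion requires each simple factor $T_j\leq M_i$ to lie in \emph{at least} one component, i.e.\ to act nontrivially on $[M_i:L_i]$; this holds because the kernel of $M_i$ acting on $[M_i:L_i]$ is normalized by $M_i$, centralized by the remaining $M_j$, hence normal in $M$ and contained in $M_\omega$, so it is trivial by faithfulness of $G$ on $\Omega$. Second, your claim that $G$ is transitive on $\X$ uses that every direct factor $M_i$ of $M$ is a subproduct of the $T_j$; this is where non-abelianness of the plinth enters, and it deserves a sentence.
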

\begin{proof}
Given a direct product decomposition of $M$ as above, one can construct 
a normal cartesian decomposition $\E'$ preserved by $G$ as 
in~\cite[Example~7.1]{MR2186989}. The full stabilizer of $\E'$ is 
isomorphic to $\sym\Xi\wr\sy r$ in product action 
(see~\cite[Section~1.2]{embedding}) and $G$ can be
embedded into  this wreath product using a monomorphism $\alpha$. 
Since $\E'$ is $M$-normal, the inclusion $G\alpha\leq \sym\Xi\wr\sy r$
is normal.
\end{proof}

In the case when $G$ is primitive, 
the direct product decomposition $\{M_1,\ldots,M_r\}$ in the theorem
is referred to as a blow-up decomposition by~\cite{kov:blowup}.

\begin{lemma}
Assuming~Hypothesis~\ref{hyp1}, if $G\leq W$ is a normal inclusion, then $\comp Mj$ is a transitive 
minimal normal subgroup of $\comp Gj$. Further, if $M$
is non-regular, $\comp Mj$ is the unique such transitive minimal normal 
subgroup of $\comp Gj$. 
\end{lemma}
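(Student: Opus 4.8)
First I would separate the three assertions and handle them in increasing order of difficulty, leaning on the structural facts already established. Transitivity of $\comp Mj$ on $\Delta$ needs nothing new: it is exactly Theorem~\ref{comps}(1). For normality, the observation is that by Lemma~\ref{minbase} the plinth lies in the base group $(\sym\Delta)^\ell$, which is contained in every $W_j$; hence $M\leq G\cap W_j=G_j$, and since $M\trianglelefteq G$ we get $M\trianglelefteq G_j$. Applying the projection $p_j\colon W_j\to\sym\Delta$ onto the first factor, whose images are by definition $\comp Gj=p_j(G_j)$ and $\comp Mj=p_j(M)$, the image of a normal subgroup is normal, so $\comp Mj\trianglelefteq\comp Gj$.

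The substance of part~(1) is minimality. Viewed inside $M$, the component is the subproduct $\comp Mj=\prod_{i\in I_j}T_i$ of those simple factors it contains, and for a normal inclusion the index sets $I_j$ partition $\{1,\dots,k\}$. I would first show that $G$ permutes the components in agreement with $\pi$: writing $\comp Mj$ as the intersection of $M$ with the subgroup of the base group supported on the $j$-th coordinate, and using $M^g=M$ together with the fact that conjugation by $g$ carries the $j$-th coordinate subgroup to the $(j(g\pi))$-th, one obtains $(\comp Mj)^g=\comp M{j(g\pi)}$. Thus the (transitive, since $M$ is minimal normal in $G$) conjugation action of $G$ on $\{T_1,\dots,T_k\}$ has $\{I_j\}$ as a block system equivalent to the $G\pi$-action on $\underline\ell$. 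Invoking the standard fact that the setwise stabilizer of a block in a transitive action is transitive on that block—here the stabilizer of $I_j$ is precisely $G_j$—shows that $G_j$ permutes $\{T_i\mid i\in I_j\}$ transitively, and applying $p_j$ shows $\comp Gj$ permutes the simple direct factors of $\comp Mj$ transitively. Since $T$ is non-abelian simple, a normal subgroup whose simple factors are permuted transitively and which therefore admits no proper nontrivial invariant subproduct is minimal normal, finishing part~(1).

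For the uniqueness in part~(2) I would argue by contradiction. If $N\neq\comp Mj$ is a second transitive minimal normal subgroup of $\comp Gj$, then $N\cap\comp Mj=1$, whence $[N,\comp Mj]=1$ and $N\leq\cent{\sym\Delta}{\comp Mj}$. As the centralizer of a transitive group is semiregular, the transitive subgroup $N$ must be regular; this forces $\cent{\sym\Delta}{\comp Mj}$ to be regular of order $|\Delta|$ and hence $\comp Mj$ itself to be regular on $\Delta$. It then remains to propagate regularity to the whole plinth: by Theorem~\ref{comps}(2) there is a base-group element $x$ fixing $\omega$ for which the components of $M^x$ are constant on the $G\pi$-orbits in $\underline\ell$; since $G\pi$ is transitive on $\underline\ell$ by Proposition~\ref{normalstab} and conjugation by the coordinate entries of $x$ preserves regularity, all components $\comp Mi$ are regular, and because in a normal inclusion $M=\prod_i\comp Mi$ acts coordinatewise on $\Delta^\ell$, this makes $M$ regular on $\Omega$, contradicting the hypothesis that $M$ is non-regular.

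The step I expect to be the main obstacle is this final propagation in the uniqueness argument: the semiregular-centralizer computation only delivers regularity of the single component $\comp Mj$, and turning that into regularity of $M$ across all coordinates genuinely requires both the normal form of Theorem~\ref{comps}(2) and the transitivity of $G\pi$ on $\underline\ell$. By comparison, the transitivity and normality claims, and even the transitive permutation of the simple factors underlying minimality, are essentially careful bookkeeping with the projections $p_j$ and the block structure of the $G$-action on the simple factors.
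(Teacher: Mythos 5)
Your proof is correct and takes essentially the same route as the paper's: transitivity comes from Theorem~\ref{comps}, minimality from the observation that any $g\in G$ conjugating one simple factor of $\comp Mj$ to another must lie in $G_j$ (since a normal inclusion puts both factors in a unique component) together with splitting $g$ along $W_j=\sym\Delta\times(\sym\Delta\wr\sy{\ell-1})$ so that the second part centralizes the $j$-th coordinate, and uniqueness from the equivalence ``$M$ regular $\Leftrightarrow$ every $\comp Mj$ regular'' furnished by Proposition~\ref{normalstab} combined with the standard semiregular-centralizer fact. The only differences are presentational: you phrase the minimality step in block-system language and argue uniqueness by contradiction with the centralizer fact made explicit, whereas the paper argues the contrapositive directly and leaves that fact implicit.
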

\begin{proof}
Since $M\unlhd G$, $\comp Mj\unlhd\comp Gj$. 
Theorem~\ref{comps} implies that $\comp Mj$ is transitive. 
Since $M$ is non-abelian, to prove that $\comp Mj$ is a 
minimal normal subgroup of $\comp Gj$, it suffices to show that
$\comp Gj$ permutes transitively the simple components of $\comp Mj$. 
Suppose, without loss of generality, that $T_1$ and $T_2$ are 
simple components of $\comp Mj$. Then there is some $g\in G$ 
such that $(T_1)^g=T_2$. Since $G\leq W$ is a normal inclusion, $\comp Mj$ is the
unique component that contains $T_1$ and $T_2$. Hence, 
$g\pi\in W_j$. 
Thus, using the decomposition in~\eqref{Wjdecomp},
 $g=g_1g_2$ where $g_1\in\comp Gj$ and 
$g_2\in(\sym\Delta)\wr\sy{\ell-1}$. The component $g_2$ centralizes 
both $T_1$ and $T_2$, and hence we must have that $(T_1)^{g_1}=T_2$. Thus
$\comp Gj$ is transitive on the simple components of $\comp Mj$, and 
hence $\comp Mj$ is a minimal normal subgroup of $\comp Gj$. 

By Proposition~\ref{normalstab}, 
$$
|\Omega|=|M:M_\omega|=\prod_{j=1}^\ell|\comp Mj:(\comp Mj)_\delta|=\frac{M}{\prod_j|(\comp Mj)_\delta|}.
$$
Hence $M$ is regular if and only if each component $\comp Mj$ is regular.
In particular, if $M$ is not regular, then neither is $\comp Mj$ and 
then it must be the unique minimal normal subgroup of $\comp Gj$.
\end{proof}

\subsection{Transitive $\boldsymbol {G\pi}$}\label{sect:transcd}
We now turn our attention to non-normal 
inclusions. Assume that $G\pi$ is transitive on $\underline\ell$.
This implies that the conjugation action of $G$ is transitive on 
the set $\{\comp Mj\}$.
Set $T=T_1$. Since $G$ acts transitively on
the $T_i$ by conjugation, the treatment that follows does not depend 
on this choice of  $T$.
The group $T$ can be
a subgroup in several of the components $\comp Mj$. Since 
$G$ acts transitively on both sets $\{T_i\}$ and $\{\comp Mj\}$, the
number of components $\comp Mj$ such that $T\leq\comp Mj$ 
is independent of the choice of  $T$.
The inclusion $G\leq W$ is normal if and only if this number is one. 
We will investigate here the situation when this is not
the case; that is, $T$ is contained in at least two $\comp Mj$. 
As above, we set $\overline T=\cent MT$.




\begin{theorem}\label{types}
Suppose that Hypothesis~\ref{hyp1} holds, that $M$ is non-abelian, and that   
$G\pi$ is transitive. Suppose that $T$ is a simple factor of $M$. 
Then 
the following are valid.
\begin{enumerate}
\item The number $s$ of  components $\comp Mj$ such that 
$T\leq \comp Mj$ is independent of the choice of $T$ and
$s\leq 3$.
\item The inclusion $G\leq W$ is normal if and only if $s=1$.
\item If  $(\comp Mj)_\delta\overline T/\overline T=T$ for some $j$ 
and $\delta\in\Delta$ then
$s\leq 2$.
\end{enumerate}
\end{theorem}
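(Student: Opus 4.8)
The plan is to translate the combinatorics of which simple factors lie in which components into the language of (multiple) factorizations of $T$, and then to appeal to the classification of such factorizations in~\cite{bad:fact}. I would first dispose of the independence statement in~(1) and of~(2), which need no factorization theory. Since $M$ is a non-abelian minimal normal subgroup of $G$, conjugation makes $G$ act transitively on the set $\{T_1,\ldots,T_k\}$ of simple factors; and since conjugation by $g\in G$ carries $\comp Mj$ to $\comp M{j(g\pi)}$, the hypothesis that $G\pi$ is transitive makes $G$ act transitively on $\{\comp M1,\ldots,\comp M\ell\}$, with $T_i^g$ again a simple factor for every $g$. Thus $T_i\leq\comp Mj$ if and only if $T_i^g\leq\comp M{j(g\pi)}$, so conjugation by $g$ matches up the components containing $T_i$ with those containing $T_i^g$ bijectively; transitivity of $G$ on the $T_i$ then forces the number $s$ of such components to be the same for every simple factor. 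Part~(2) is immediate from this together with the definition of a normal inclusion in Section~\ref{sect:normalinc}: normality means every $T_i$ lies in exactly one component, i.e. $s=1$ for every $T_i$, which by the independence just proved is the single condition $s=1$.

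For the remaining bounds I would exploit Lemma~\ref{notmax}. With $\omega=(\delta,\ldots,\delta)$ and $A_r=(\comp M{j_r})_\delta\overline T/\overline T$ attached to the $s$ components $\comp M{j_1},\ldots,\comp M{j_s}$ containing $T$, the lemma gives $A_r(\bigcap_{i\neq r}A_i)=T$ for every $r$. Since $\bigcap_{i\neq r}A_i\leq A_{r'}$ whenever $r'\neq r$, this already yields the pairwise factorizations $T=A_rA_{r'}$, and, more strongly, the proper members of $\{A_1,\ldots,A_s\}$ form a strong multiple factorization of the non-abelian simple group $T$ in the sense of~\cite{bad:fact}. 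The classification there shows that a non-abelian finite simple group admits no strong multiple factorization with four or more proper factors, so at most three of the $A_r$ are proper.

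It remains to control the \emph{improper} factors (those $A_r=T$), and this is exactly the content of~(3): I would show that if some $A_r=T$ then $s\leq 2$. Once~(3) is in hand the bound $s\leq 3$ in~(1) follows, since if no $A_r$ is improper then all $s$ of them are proper and the classification bound applies, whereas if some $A_r$ is improper then $s\leq 2$. To prove~(3) I would feed the extra relation $M_\omega\overline T/\overline T\leq\bigcap_r A_r$ of Lemma~\ref{notmax}, together with the transitivity of $G$ on the components and the explicit description in~\cite{bad:fact} of the subgroups that can occur as factors of a genuine triple factorization, into a comparison of the indices $|\comp Mj:(\comp Mj)_\delta|=|\Delta|$ across the several components containing $T$: an improper slot $A_r=T$ should be incompatible with the existence of two further components $\comp M{j'},\comp M{j''}$ whose factors $A_{r'},A_{r''}$ would have to be proper and to satisfy $T=A_{r'}A_{r''}$ while simultaneously fitting the rigid configuration forced by the classification.

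The main obstacle is precisely this last step. The abstract equations of Lemma~\ref{notmax} by themselves do not forbid a configuration with one improper and two proper factors, so ruling out $s=3$ in the presence of an improper factor is not formal: it requires the detailed list of factorizations in~\cite{bad:fact} (the relevant simple groups being essentially $\alt 6$ and $\sp 4{2^a}$, which is why these appear in Theorem~\ref{main}) and careful bookkeeping of the point-stabilizer indices in the distinct components through which $T$ acts. Everything preceding it is bookkeeping with transitivity and the factorization equations.
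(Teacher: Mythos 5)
Your treatment of the independence claim in (1) and of part (2) is correct, and it is the same elementary argument the paper uses (these points are exactly the transitivity and normality observations made in Sections~\ref{sect:normalinc} and~\ref{sect:transcd} before the theorem is stated). Your observation that the \emph{proper} members of $\{A_1,\ldots,A_s\}$ form a strong multiple factorization of $T$, so that the classification in~\cite{bad:fact} caps their number at three, is also sound and matches the paper's remark following Lemma~\ref{notmax}. The gap is everything concerning components with $A_r=(\comp M{j_r})_\delta\overline T/\overline T=T$: the strong-multiple-factorization bound says nothing about such improper factors, so your proof of $s\leq 3$ in part (1) splits into the all-proper case (done) and the case of an improper factor, which you propose to settle via part (3) --- and part (3) is exactly the step you concede is only a plan (``not formal'', ``the main obstacle''). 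So both (3) and, through it, the bound in (1) are genuinely unproven, and the hole sits precisely where the real content of the theorem lies.

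For comparison, the paper does not attempt to derive (1) and (3) from Lemma~\ref{notmax} at all: it identifies the components with the cartesian system $K_j=(\comp Mj)_\delta\times\overline{\comp Mj}$ and then invokes \cite[Theorem~6.1]{MR2186989}, where these bounds were established. The case you are missing is handled there by a mechanism different from strong multiple factorizations: if $A_r=T$, then by Scott's Lemma $(\comp M{j_r})_\delta$ involves a full diagonal subgroup linking $T$ to a second simple factor $T_i$ (this is the same mechanism the paper recalls when defining type $\cd{1S}$), and the factorization equations then force \emph{full} factorizations of $T$ --- classified in~\cite{bad:fact} and occurring only for $\alt 6$, $\mat{12}$, $\sp 4{2^a}$, $\pomegap 8q$ --- whose analysis, transported through the diagonal isomorphism to $T_i$, is what excludes a third component containing $T$. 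If you want a self-contained proof you must carry out this diagonal-subgroup/full-factorization analysis; the shorter, honest fix is to do what the paper does and cite \cite[Theorem~6.1]{MR2186989} for parts (1) and (3).
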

\begin{proof}
As explained in the proof of Lemma~\ref{notmax}, 
the cartesian system of subgroups with respect to 
$\omega=(\delta,\ldots,\delta)$ that corresponds
to the natural cartesian decomposition of $\Omega$ is 
$\{K_1,\ldots,K_\ell\}$ where $K_j=(\comp Mj)_\delta\times \overline{\comp Mj}$. 
Now parts~(1) and~(3) follow from~\cite[Theorem~6.1]{MR2186989}. Part~(2)
was justified before the theorem.
\end{proof}

 We use Theorem~\ref{types} 
to define the inclusion types $\cd {1S}$, $\cd {2\sim}$, $\cd{2\not\sim}$,  
and $\cd 3$ for  inclusions $G\leq W$ in~Hypothesis~\ref{hyp1}
such that $T$ is non-abelian and $G\pi$ is transitive on $\underline\ell$.
If, in Theorem~\ref{types}, $s=1$, 
the inclusion $G\leq W$ is normal, and so we assume that this is not 
the case. Set $\delta\in\Delta$.
\begin{description}
\item[$\boldsymbol{\cd{1S}}$]
  We say that the inclusion  $G\leq W$ is of type $\cd{1S}$ if $s=2$ and 
$(\comp Mj)_\delta\overline T/\overline T=T$ for some $j$. 
\item[$\boldsymbol{\cd 2}$] Suppose that $s=2$ and  
$(\comp Mj)_\delta\overline T/\overline T\neq T$ for all $j$. 
Let $j_1$ 
and $j_2$ be distinct indices such that $T\leq \comp M{j_1}\cap \comp M{j_2}$.
Then the coordinate projections $A=(\comp M{j_1})_\delta\overline T/\overline T$ 
and $B=(\comp M{j_2})_\delta\overline T/\overline T$ are proper 
subgroups of $T$.
We say that the inclusion $G\leq W$ is 
of type $\cd{2\sim}$ if $A\cong B$. Otherwise the inclusion 
is of type $\cd{2\not\sim}$.
\item[$\boldsymbol{\cd 3}$] We say that the inclusion $G\leq W$ is of type $\cd 3$ if $s=3$.
\end{description}

The fact that these types are well-defined was proved
in~\cite[Theorem~6.2]{MR2186989}.
The definition of the inclusion type $\cd{2\sim}$ is slightly different 
in~\cite{MR2186989} where we required that the two proper projections
should actually be $G_\omega$-conjugates. The definition given here leads
to a stronger theorem.

Suppose that $G\leq W$ is an inclusion of 
type $\cd{1S}$ and fix $\delta\in\Delta$. 
Suppose that $T$ is a simple factor of $M$ such that
$T\leq\comp Mj$ and $(\comp Mj)_\delta\overline T/\overline T=T$. 
By a version of Scott's Lemma~\cite[Lemma~4.2]{MR2186989} 
on subdirect subgroups in 
direct products of non-abelian simple groups,  
$(\comp Mj)_\delta$ contains a full diagonal subgroup isomorphic to $T$.
Then $(\comp Mj)_\delta=D\times\cent{(\comp Mj)_\delta}D$ where
$D$ is a full diagonal subgroup in $T\times T_i$ for some $i$; that is,
$D=\{(t,t\alpha)\mid t\in T\}$ with some isomorphism $\alpha:T\rightarrow 
T_i$. We say that the full diagonal subgroup $D$ is 
{\em involved in $(\comp Mj)_\delta$} and that $\{T,T_i\}$ is the 
{\em support} of 
$D$. In this case, $k$ is even and $M$ admits a direct decomposition 
\begin{equation}\label{dd1s}
\mathcal S=\{M_1,\ldots,M_{k/2}\}
\end{equation}
such that $M_i=T_{i_1}\times T_{i_2}$ where $\{T_{i_1},T_{i_2}\}$ is the
support of some full diagonal subgroup $D$ involved in $(\comp Mj)_\delta$
for some $j$; see~\cite[Theorem~6.1]{MR2186989}.

\subsection{A structure theorem}\label{sect:strth}
The power of characterizing inclusions  $G\leq W$ into types as above lies in
the fact that
in many cases strong information can be deduced about the 
permutational isomorphism type of $M$.
We denote the dihedral group of
order $2a$ by $\dih a$.

\begin{theorem}\label{c2}
Assume that Hypothesis~\ref{hyp1} holds, that $M$ is non-abelian and let
$T$ be a simple factor of $M$. 
If the inclusion type is $\cd{1S}$ then let $\S$ be the direct decomposition 
of $M$ as in~\eqref{dd1s}; otherwise set $\S=\{T_1,\ldots,T_k\}$. Set
$k_1=|\S|$ and let
$\omega\in\Omega$.
\begin{enumerate}
\item $G\pi$ has at most two orbits in $\underline\ell$. Further, 
if $G\pi$ has two orbits in $\underline\ell$ then $T$ is isomorphic 
to one of the groups in 
columns
1--5 of Table~\ref{c2table}. 
\item If the inclusion $G\leq W$ has
type $\cd{1S}$ or $\cd{2\sim}$, then $T$ is 
isomorphic to 
one of the groups in columns 1--4 of Table~\ref{c2table}.
\item If the type of the inclusion $G\leq W$ is $\cd 3$, then 
$T$ is isomorphic to one of the groups
in columns 6--8 of Table~\ref{c2table}.
\item In all parts (1)--(3), we have that $M_\omega$ contains an 
$\S$-subgroup isomorphic to $X^{k_1}$ and is contained in an
$\S$-subgroup isomorphic to $Y^{k_1}$ where $X$ and $Y$ are 
as in the corresponding column of Table~\ref{c2table}. 
In particular, when $X=Y$, then $M_\omega$ is an $\S$-subgroup.
\item If the inclusion $G\leq W$ has type $\cd{2\not\sim}$, then
  $T$ can be factorized as $T=AB$ with proper subgroups $A,\ B\leq T$
  and $M_\omega$ is isomorphic to a subgroup of $(A\cap B)^k$.
\end{enumerate}
\end{theorem}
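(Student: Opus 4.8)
The plan is to extract, from each simple factor $T$ of $M$, a strong multiple factorization of $T$, and then to read off the permissible isomorphism types of $T$ and the associated subgroups from the known classification of such factorizations. First I would fix a simple factor $T$ and let $\comp M{j_1},\ldots,\comp M{j_s}$ be the components containing $T$, writing $A_r=(\comp M{j_r})_\delta\overline T/\overline T$ as in Lemma~\ref{notmax}. By Theorem~\ref{types}(1) we have $s\leq 3$, and by Lemma~\ref{notmax} the proper subgroups among $A_1,\ldots,A_s$ form a strong multiple factorization of the non-abelian simple group $T$: the equalities $A_r(\bigcap_{i\neq r}A_i)=T$ supply the factorizations, while $M_\omega\overline T/\overline T\leq\bigcap_{r\leq s}A_r$ supplies an upper bound for the stabilizer. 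This reduces the entire theorem to the arithmetic of multiple factorizations of finite simple groups.

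The main work is then to invoke the classification of strong multiple factorizations of non-abelian finite simple groups from~\cite{bad:fact}, together with the detailed correspondence between inclusion types and factorization configurations established in~\cite[Theorems~6.1 and~6.2]{MR2186989}. This classification produces a short, explicit list of triples $(T,X,Y)$, which I would record as the columns of Table~\ref{c2table}; the hard part is entirely contained in this classification, and I expect it to be the main obstacle, since it ultimately rests on the classification of maximal factorizations of almost simple groups. With the list in hand the matching of types to columns is bookkeeping. When the type is $\cd{1S}$, one projection equals $T$, so by the version of Scott's Lemma recalled before the theorem $(\comp Mj)_\delta$ contains a full diagonal subgroup and the relevant factorization has a ``full'' factor; when the type is $\cd{2\sim}$ the two proper factors $A,B$ are isomorphic. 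These two cases populate columns~1--4, giving part~(2). The type $\cd3$ is exactly $s=3$, a genuine triple factorization, and these occur only for the groups in columns~6--8, giving part~(3). For part~(1), I would use that $G$ permutes the simple factors $T_i$ transitively and, compatibly, permutes the components $\comp Mj$: a factor lying in components drawn from distinct $G\pi$-orbits again yields a multiple factorization of $T$, and the classification permits this only in configurations with at most two such orbits, whence the two-orbit list (columns~1--5); this orbit count follows from~\cite[Theorem~6.1]{MR2186989}.

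For part~(4) I would combine the two inclusions from Lemma~\ref{notmax}. The upper bound is immediate: $M_\omega\overline T/\overline T\leq\bigcap_r A_r=Y$, and since a subgroup is contained in the product of its projections, running this over the $k_1$ blocks of $\S$ shows $M_\omega$ is contained in an $\S$-subgroup isomorphic to $Y^{k_1}$. The lower bound uses the classification data directly: in each case it exhibits a subgroup $X$ forced to lie in $(\comp Mj)_\delta$ for the relevant $j$, so the product of these copies over the $k_1$ blocks of $\S$ is an $\S$-subgroup isomorphic to $X^{k_1}$ contained in $M_\omega$. When $X=Y$ the two bounds coincide, forcing $M_\omega$ itself to be an $\S$-subgroup. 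Finally, part~(5) is the most direct consequence of Lemma~\ref{notmax}: for type $\cd{2\not\sim}$ we have $s=2$ with both $A=A_1$ and $B=A_2$ proper, so $T=AB$, while the inclusion $M_\omega\overline T/\overline T\leq A_1\cap A_2=A\cap B$, applied simultaneously to all $k$ simple factors via the coordinate projections, embeds $M_\omega$ into $(A\cap B)^k$.
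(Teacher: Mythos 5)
Your proposal is correct and takes essentially the same route as the paper: both reduce the theorem, via the cartesian system of subgroups and Lemma~\ref{notmax}, to the classification of strong multiple factorizations of finite simple groups and then quote the Baddeley--Praeger--Schneider classification results to populate Table~\ref{c2table}, with part~(5) read off directly from Lemma~\ref{notmax}. The only discrepancies are bibliographic rather than mathematical: the paper derives the two-orbit statement in part~(1) from Theorems~8.1--8.2 of~\cite{MR2346470} (the intransitive case) rather than from~\cite[Theorem~6.1]{MR2186989}, and the lower bound $X^{k_1}\leq M_\omega$ in part~(4) --- which requires knowing that a copy of $X$ lies in the stabilizer's intersection with each simple factor, not merely in the component stabilizers' projections --- is taken from Theorem~6 of~\cite{MR2004334} and Proposition~5.2, Theorem~7.1 and Proposition~7.2 of~\cite{MR2342458}, which is precisely the content your phrase ``the classification data directly exhibits a subgroup $X$'' defers to.
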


{\footnotesize \begin{center}
\begin{table}
$$
\begin{array}{|c|c|c|c|c|c|c|c|c|}
\hline
& 1 & 2 & 3 & 4 & 5 & 6 & 7 & 8\\
\hline
T & \alt 6 & \begin{array}{c}\sp 4{2^a}\\a\geq 2\end{array} &\pomegap 8q & \mat{12} & \mat{12} &
 \begin{array}{c}\sp {4a}2\\
a\geq 2\end{array} & \pomegap 83 & \sp 62\\
\hline
X& \dih{10} & \dih{2^a+1} & \mbox{\sf G}_2(q) & \psl 2{11} &
C_{11}\rtimes C_5 & C_2\times\sp{2a-2}4 & C_6\rtimes \sy 4 & \dih 3\\
\hline
Y& \dih{10} & \dih{2^a+1}\cdot 2 & \mbox{\sf G}_2(q) & \psl 2{11} &
C_{11}\rtimes C_5 & C_2\times\sp{2a-2}4 & C_6\rtimes \sy 4 & \dih 3\cdot 2\\
\hline
\end{array}
$$
\caption{Table for Theorem~\ref{c2}}
\label{c2table}
\end{table}
\end{center}}



\begin{proof}
Using the observations on the cartesian system of subgroups
in the proof of Theorem~\ref{notmax}, the claims of this theorem
are simple consequences of the results published in~\cite{MR2004334,MR2186989,MR2342458,MR2346470}.
(1) follows from Theorems~8.1 and~8.2  of~\cite{MR2346470}.
(2)--(3) follow from~\cite[Theorem~6.3]{MR2186989}.
(4) follows from Theorem~6 of~\cite{MR2004334},
Proposition~5.2, Theorem~7.1 and Proposition~7.2
of~\cite{MR2342458}. Claim~(5) follows from Lemma~\ref{notmax}.
\end{proof}

When $M=T$ is non-abelian simple, then an inclusion
$G\leq W$ as in~Hypothesis~\ref{hyp1} such that $G\pi$ is transitive must  have type $\cd{2\sim}$
(see~\cite[Theorem~6.2]{MR2186989}). It is an important consequence 
of Theorem~\ref{constrnormal} that in all 
of the cases of Theorem~\ref{c2} when $M_\omega$ is an $\S$-subgroup with 
$k_1\geq 2$, there is also a normal inclusion $G\leq\sym\Xi\wr\sy{k_1}$.





\section{Cartesian decompositions and arc-transitive graphs}\label{cdgraphs}

We will work under the following assumptions.
\begin{hypothesis}\label{hyp2}
Assume Hypothesis~\ref{hyp1} and, in addition, that 
$\Gamma$ is a connected graph with vertex set $\Omega$ and that
$G$ is a subgroup of  $\aut\Gamma$.
\end{hypothesis}

By Lemma~\ref{minbase},
$M\leq(\Sym\Delta)^\ell$. 
We denote by $\ver\Gamma$ and $\ed\Gamma$ the vertex set 
and the edge set of $\Gamma$,
respectively.
For $\alpha\in\ver\Gamma$, we let $\Gamma(\alpha)$ denote the neighborhood 
$\{\beta\mid\{\alpha,\beta\}\in\ed\Gamma\}$ of $\alpha$ in $\Gamma$.
Then the stabilizer $G_\alpha$ acts on $\Gamma(\alpha)$. 
As $G$ is assumed to be transitive on $\ver\Gamma$, the graph $\Gamma$ 
is $(G,2)$-arc-transitive if and only
if $G_\alpha$ is 2-transitive on $\Gamma(\alpha)$.

\begin{lemma}\label{marctrans}
Suppose that Hypothesis~\ref{hyp2} holds, that 
$M$ is non-regular and that $G_\alpha$ is quasiprimitive on 
$\Gamma(\alpha)$. 
Then
$\Gamma$ is $M$-arc-transitive, and so $M_\alpha$ is transitive 
on $\Gamma(\alpha)$ for all $\alpha\in\ver\Gamma$. 
\end{lemma}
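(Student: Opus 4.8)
The plan is to use that $M\unlhd G$, so that $M_\alpha=M\cap G_\alpha$ is a normal subgroup of $G_\alpha$, and then to play this off against the quasiprimitivity of $G_\alpha$ on $\Gamma(\alpha)$. Since $M$ is already transitive on $\ver\Gamma=\Omega$, the graph $\Gamma$ is $M$-arc-transitive exactly when $M_\alpha$ is transitive on $\Gamma(\alpha)$; so the one thing I need to prove is this transitivity, and the final clause of the lemma will then drop out because the stabilizers $M_\alpha$ are all conjugate under the vertex-transitive group $M$.

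The image of $M_\alpha$ in the induced permutation group $G_\alpha^{\Gamma(\alpha)}$ is normal, so by quasiprimitivity it is either trivial or transitive. I would therefore reduce the lemma to showing that $M_\alpha$ does \emph{not} fix $\Gamma(\alpha)$ pointwise.

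To do this I would argue by contradiction: assume $M_\alpha$ fixes $\Gamma(\alpha)$ pointwise. First, using that $M$ is vertex-transitive and acts by graph automorphisms, I would conjugate to deduce that for every vertex $\gamma$ the stabilizer $M_\gamma$ fixes $\Gamma(\gamma)$ pointwise (if $\alpha^m=\gamma$ then $M_\gamma=(M_\alpha)^m$ fixes $\Gamma(\gamma)=\Gamma(\alpha)^m$). Next, for any edge $\{\gamma,\delta\}$ this gives $M_\gamma\leq M_\delta$ and $M_\delta\leq M_\gamma$, so $M_\gamma=M_\delta$. Then, invoking connectivity of $\Gamma$, I would propagate this equality along paths to conclude $M_\gamma=M_\alpha$ for all vertices $\gamma$. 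Consequently $M_\alpha=\bigcap_\gamma M_\gamma$ is the kernel of the action of $M$ on $\Omega$, which is trivial because $M\leq W\leq\Sym\Omega$ acts faithfully. Thus $M_\alpha=1$, forcing the transitive group $M$ to be regular and contradicting the hypothesis that $M$ is non-regular.

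The contradiction shows $M_\alpha$ acts nontrivially on $\Gamma(\alpha)$, and quasiprimitivity then upgrades this to transitivity, finishing the argument. The step that carries the weight is the connectivity propagation: it is what turns the local assumption into the global statement that $M_\alpha$ lies in the trivial kernel, and it is exactly here that both connectedness and non-regularity are consumed. I expect no real obstacle beyond this bookkeeping — the argument is the familiar normal-subgroup-and-connectivity device, with quasiprimitivity substituting for the stronger $2$-transitivity of $G_\alpha$ on $\Gamma(\alpha)$ that is available in the $2$-arc-transitive setting.
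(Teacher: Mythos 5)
Your proposal is correct and follows essentially the same route as the paper: reduce to showing the normal subgroup $M_\alpha \unlhd G_\alpha$ acts nontrivially on $\Gamma(\alpha)$, then, assuming it fixes $\Gamma(\alpha)$ pointwise, use vertex-transitivity of $M$ and connectivity of $\Gamma$ to propagate stabilizer equality to all vertices, concluding $M_\alpha=1$ by faithfulness and contradicting non-regularity. The paper phrases the propagation as an induction on distance from $\alpha$, but the content is identical.
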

\begin{proof}
As $M_\alpha\unlhd G_\alpha$,
either $M_\alpha$ is transitive or $M_\alpha$ is trivial on 
$\Gamma(\alpha)$. Suppose that
$M_\alpha$ is trivial. We show that in this case $M_\alpha$ fixes
all vertices of $\Gamma$, which is impossible as $M$ is not regular. 
We have by the conditions that $M_\alpha$ fixes all vertices in 
$\{\alpha\}\cup\Gamma(\alpha)$; that is, it fixes all vertices of distance 
less than or equal to $1$ from $\alpha$. Thus if $\beta\in\Gamma(\alpha)$, 
then $M_\beta=M_\alpha$ and $M_\beta$ fixes all vertices of 
$\Gamma(\beta)$. Thus $M_\alpha$ fixes all vertices with distance at most 2
from $\alpha$. Continuing by induction and using that $\Gamma$ is connected, 
$M_\alpha$ fixes all vertices of $\Gamma$. This is a contradiction, and hence
$M_\alpha$ is transitive on $\Gamma(\alpha)$. 
\end{proof}

Suppose that $\pi:W\rightarrow\sy\ell$ is the natural projection, and,
for $j\in\underline\ell$, let $\comp Gj$ be the component as
defined in Section~\ref{sect:cs}.
If $G\pi$ is transitive, then we may assume, by Theorem~\ref{comps}, 
that $G\leq\comp G1\wr(G\pi)$.

\begin{lemma}\label{lemmabeta}
Assume Hypothesis~\ref{hyp2}, that $\Gamma$ is $M$-arc-transitive, 
that $G\pi$ is transitive on $\underline\ell$ and that $G\leq\comp G1\wr(G\pi)$.
Let $\alpha=(\alpha_1,\ldots,\alpha_1)\in\Omega$,  
$\beta=(\beta_1,\ldots,\beta_\ell)\in\Gamma(\alpha)$ and set
$K=\comp G1$. Then
there is some $h\in (K_{\alpha_1})^\ell$ such that $\beta h=(\beta_1,\ldots,\beta_1
)$.
Thus, in the image graph $\Gamma h$, the tuples $(\alpha_1,\ldots,\alpha_1)$
and $(\beta_1,\ldots,\beta_1)$ are connected. Furthermore, 
$G^h\leq\aut{(\Gamma h)}$ with 
$G^h\leq \comp G1\wr (G\pi)$. 
\end{lemma}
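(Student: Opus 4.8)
The plan is to reduce the statement to a single orbit claim on $\Delta$ and then read off the remaining assertions formally. I will write elements of $\comp G1\wr(G\pi)$ as $(g_1,\ldots,g_\ell)\sigma$ with $g_t\in K$ and $\sigma\in G\pi$. The first observation is that once an $h=(h_1,\ldots,h_\ell)\in(K_{\alpha_1})^\ell$ with $\beta h=(\beta_1,\ldots,\beta_1)$ has been produced, the rest is immediate: each $h_t$ fixes $\alpha_1$, so $\alpha h=\alpha$, whence the edge $\{\alpha,\beta\}$ of $\Gamma$ becomes the edge $\{\alpha,(\beta_1,\ldots,\beta_1)\}$ of $\Gamma h$; the inclusion $G^h\leq\aut{(\Gamma h)}$ follows from $G\leq\aut\Gamma$; and $G^h\leq\comp G1\wr(G\pi)$ holds because $h\in K^\ell$ already lies in the group $\comp G1\wr(G\pi)$, which contains $G$. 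So the whole task is to construct $h$, and comparing coordinates shows that this is exactly the assertion that $\beta_i$ lies in the $K_{\alpha_1}$-orbit of $\beta_1$ on $\Delta$ for every $i\in\underline\ell$ (given this, I take any $h_i\in K_{\alpha_1}$ with $\beta_ih_i=\beta_1$, and $h_1=1$).

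The first real step will be to show that $G_\alpha$ still projects onto all of $G\pi$. Since $M$ is transitive on $\Omega$, a Frattini argument yields $G=MG_\alpha$; and $M\pi=1$ by Lemma~\ref{minbase}, so $G_\alpha\pi=G\pi$ is transitive on $\underline\ell$. Fixing $i$, I can therefore pick $g=(g_1,\ldots,g_\ell)\sigma\in G_\alpha$ with $i\sigma=1$. Because $\alpha$ is the diagonal point and $\alpha g=\alpha$, the product action formula forces $\alpha_1g_t=\alpha_1$ for all $t$, so that in fact each $g_t\in K_{\alpha_1}$.

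The second step will use $M$-arc-transitivity to pin down first coordinates of neighbours. From Lemma~\ref{minbase} and $G\leq\comp G1\wr(G\pi)$ I get $M\leq K^\ell$, hence $M_\alpha\leq(K_{\alpha_1})^\ell$; since $M_\alpha$ is transitive on $\Gamma(\alpha)$, projecting this orbit to the first coordinate shows that the first coordinate of every neighbour of $\alpha$ lies in the single $K_{\alpha_1}$-orbit of $\beta_1$. I then apply this to the neighbour $\beta g\in\Gamma(\alpha)$: its first coordinate is $\beta_{1\sigma^{-1}}g_{1\sigma^{-1}}=\beta_ig_i$, so $\beta_ig_i$ lies in that orbit, and since $g_i\in K_{\alpha_1}$ I conclude that $\beta_i$ does too. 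Letting $i$ range over $\underline\ell$ and assembling the resulting $h_i$ gives the required $h$.

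The hard part is really the bookkeeping of the product action together with isolating why $M$-arc-transitivity, and not merely $G$-arc-transitivity, is needed: only $M$ is guaranteed to sit inside the base group $(\sym\Delta)^\ell$ and so to act coordinatewise, which is what makes the orbit statement on first coordinates meaningful; the coordinate-permuting element $g\in G_\alpha$ supplied by the transitivity of $G_\alpha\pi$ then transports that statement from coordinate $i$ back to coordinate $1$.
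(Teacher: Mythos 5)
Your proof is correct and follows essentially the same route as the paper's: bound the neighbourhood coordinatewise using $M_\alpha\leq (K_{\alpha_1})^\ell$ and $M$-arc-transitivity, then use $G=MG_\alpha$ together with $M\pi=1$ to produce a coordinate-permuting element of $G_\alpha$ that identifies the $K_{\alpha_1}$-orbits of $\beta_1$ and $\beta_i$. The only differences are cosmetic (you send coordinate $i$ to coordinate $1$ where the paper sends $1$ to $i$, and you spell out why the base-group entries of $g\in G_\alpha$ lie in $K_{\alpha_1}$, which the paper leaves implicit).
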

\begin{proof}
Since $M_\alpha$ is transitive on $\Gamma(\alpha)$ and 
$M_\alpha\leq (K_{\alpha_1})^\ell$, we obtain that
$$
\Gamma(\alpha)\subseteq \beta_1K_{\alpha_1}\times\cdots\times 
\beta_\ell K_{\alpha_1}
$$
where $\beta_i K_{\alpha_1}$ is the $K_{\alpha_1}$-orbit containing
$\beta_i$ and the product is the cartesian product of sets.
We claim that $\beta_1K_{\alpha_1}=\beta_iK_{\alpha_1}$ 
for all $i$.  Choose $i\in\{1,\ldots,\ell\}$. 
Since $M$ is transitive on $\Omega$, $G=MG_\alpha$. 
Since $G\pi$ is transitive on $\ell$ and, by Lemma~\ref{minbase}, 
$M$ is in the kernel of
$\pi$, $G_\alpha\pi$ must be transitive on $\underline\ell$.  
Thus there is $g\in G_\alpha$ such that $g=(h_1,\ldots,h_\ell)\sigma$ 
with $h_1,\ldots,h_\ell\in K_{\alpha_1}$, $\sigma\in G\pi$
and
$1\sigma=i$.
Now 
$$
\beta g=(\beta_1,\ldots,\beta_\ell)(h_1,\ldots,h_\ell)\sigma=(\beta_1h_1,\ldots,\beta_\ell h_\ell)\sigma.
$$
Thus the $i$-th entry of $\beta g$ is $\beta_1h_1$ and so the $i$-th entry
of $\beta g$ is contained in $\beta_1 K_{\alpha_1}$. However, the $i$-th entry of 
$\beta g$ is also contained in $\beta_i K_{\alpha_1}$ which gives that $\beta_iK_{\alpha_1}=\beta_1
K_{\alpha_1}$. 
Thus there is an element 
$h=(h_1,\ldots,h_\ell)\in (K_{\alpha_1})^\ell$ such that $\beta h=(\beta_1,\ldots,\beta_1)$. Clearly, $G^h\leq \aut{(\Gamma h)}$; further
$G^h\leq (K\wr (G\pi))^h=K\wr (G\pi)$ since $h\in K^\ell$. 
\end{proof}

If $\Gamma_1$ and $\Gamma_2$ are graphs, then the {\em direct product} of 
$\Gamma_1\times\Gamma_2$ is defined as the graph whose vertex set
is the cartesian product $\ver{\Gamma_1}\times\ver{\Gamma_2}$ and adjacency is 
defined by the rule that $(\gamma_1,\gamma_2),\ (\gamma_1',\gamma_2')$ 
are adjacent in $\Gamma_1\times\Gamma_2$ if and only if $\gamma_1$ is adjacent 
to $\gamma_1'$ in $\Gamma_1$ and 
$\gamma_2$ is adjacent to $\gamma_2'$ in $\Gamma_2$. The definition 
of direct product can be extended to 
an arbitrary finite number of factors, and in particular, if $\Gamma_1$ 
is a graph and $l\geq 2$, then one can define $(\Gamma_1)^l$ as the graph with
vertex set $\ver{\Gamma_1}^l$ such that $(\gamma_1,\ldots,\gamma_l)$ 
is adjacent to $(\gamma_1',\ldots,\gamma_l')$ in $(\Gamma_1)^l$ 
if and only if $\gamma_i$ is adjacent to $\gamma_i'$ in $\Gamma_1$ for all $i$. Note that if
$\alpha_1\in\Gamma_1$ and $\alpha=(\alpha_1,\ldots,\alpha_1)$ then
the neighborhood $(\Gamma_1)^l(\alpha)$ of $\alpha$ in $(\Gamma_1)^l$ is 
equal to the cartesian power $(\Gamma_1(\alpha_1))^l$ 
of the neighborhood of $\alpha_1$ in $\Gamma_1$. See~\cite{productgr1}
for more background on products of graphs and their properties.

\begin{proposition}\label{grnormal}
Suppose that Hypothesis~\ref{hyp2} holds, that $\Gamma$ is $M$-arc-transitive 
and that the inclusion of $G$ into $W$ is normal. Then 
$\Gamma\cong(\Gamma_1)^\ell$ where $\Gamma_1$ is the graph whose vertex set
is $\Delta$ and edge set is the $\comp M1$-orbit of 
$\{\alpha_1,\beta_1\}$ where $\alpha_1,\beta_1\in\Delta$ are chosen so that
$(\alpha_1,\ldots,\alpha_1)$ and $(\beta_1,\beta_2,\ldots,\beta_\ell)$ 
are connected in $\Gamma$. In particular, $\Gamma$ is not 
$(G,2)$-arc-transitive.
\end{proposition}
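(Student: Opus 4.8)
The plan is to reduce to a configuration in which $M$ acts coordinate-wise on $\Omega=\Delta^\ell$ as a direct power $N^\ell$, to identify the neighbourhood of a diagonal vertex as a cartesian power, and then to propagate this identification over the whole vertex set using the $M$-action.

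First I would carry out two harmless reductions. Since the inclusion is normal, Proposition~\ref{normalstab} gives that $G\pi$ is transitive on $\underline\ell$ and, for $\omega=(\delta,\ldots,\delta)$, that $M_\omega=\prod_j(\comp Mj)_\delta$. By Theorem~\ref{comps}(2)--(3) I may conjugate $G$ by a base-group element $x$ fixing $\omega$ so that the components of $G$ are all equal and $G\leq K\wr(G\pi)$ with $K=\comp G1$; writing $N=\comp M1$, normality then forces $M=N^\ell$ acting coordinate-wise, where $N$ is transitive on $\Delta$ (Theorem~\ref{comps}(1)) and $N\trianglelefteq K$. Next, applying Lemma~\ref{lemmabeta} I replace $\Gamma$ by the isomorphic graph $\Gamma h$ with $h\in(K_{\alpha_1})^\ell$; since $h$ fixes $\alpha:=\omega$ and its coordinates lie in $K$, which normalises $N$, this preserves $M=N^\ell$, and now $\alpha=(\alpha_1,\ldots,\alpha_1)$ has a diagonal neighbour $\beta=(\beta_1,\ldots,\beta_1)$. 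As conjugation by base-group elements changes $\Gamma$ and $\Gamma_1$ only up to isomorphism, it suffices to establish the conclusion in this reduced setting.

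Next I would compute $\Gamma(\alpha)$. As $\Gamma$ is $M$-arc-transitive, $M_\alpha=N_{\alpha_1}^\ell$ is transitive on $\Gamma(\alpha)$ (Lemma~\ref{marctrans}), and the $M_\alpha$-orbit of the diagonal point $\beta$ is the product of the coordinate orbits, so $\Gamma(\alpha)=(\beta_1^{N_{\alpha_1}})^\ell$. To match this with $\Gamma_1$, I use that $\Gamma$ is undirected and $M$ is arc-transitive, so the $M$-orbital through $(\alpha,\beta)$ is self-paired; reading off the first coordinate, where both points are diagonal, shows the $N$-orbital through $(\alpha_1,\beta_1)$ is self-paired, whence the $\Gamma_1$-neighbourhood of $\alpha_1$ equals the single orbit $\beta_1^{N_{\alpha_1}}$. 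Thus $\Gamma(\alpha)=\Gamma_1(\alpha_1)^\ell=(\Gamma_1)^\ell(\alpha)$ by the neighbourhood identity recorded before the proposition. Since $N\leq\aut{\Gamma_1}$, the group $M=N^\ell$ lies in $\aut{(\Gamma_1)^\ell}$ as well as in $\aut\Gamma$; as $M$ is vertex-transitive and the two graphs share the neighbourhood of $\alpha$, transporting by $M$ gives equal neighbourhoods at every vertex, that is $\Gamma=(\Gamma_1)^\ell$.

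Finally, for the non-2-arc-transitivity I would argue that connectedness of $\Gamma=(\Gamma_1)^\ell$ forces $\Gamma_1$ to be connected and non-bipartite, by the standard connectivity criterion for direct products (see~\cite{productgr1}), so $|\Gamma_1(\alpha_1)|\geq 2$. The stabiliser $G_\alpha\leq W_\alpha$ acts on $\Gamma(\alpha)=\Gamma_1(\alpha_1)^\ell$ in product action, which preserves the number of coordinates in which two tuples differ; with $\ell\geq 2$ and $|\Gamma_1(\alpha_1)|\geq 2$ this invariant already distinguishes pairs at ``distance'' $1$ from pairs at ``distance'' $2$, so $G_\alpha$ cannot be $2$-transitive on $\Gamma(\alpha)$, and hence $\Gamma$ is not $(G,2)$-arc-transitive. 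The main obstacle is the first paragraph: one must secure the diagonal neighbour while keeping the coordinate actions identical, since it is precisely the diagonal neighbour that makes the $\ell$ coordinate orbits coincide and turns the product of orbital graphs into a genuine $\ell$-th power; verifying that the reductions preserve $M=N^\ell$, using $N\trianglelefteq K$, is the delicate point.
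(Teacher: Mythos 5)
Your proof follows essentially the same route as the paper's: reduce via Theorem~\ref{comps} and Lemma~\ref{lemmabeta} to the situation where a diagonal vertex $\alpha$ has a diagonal neighbour $\beta$, compute $\Gamma(\alpha)$ as the $\ell$-th cartesian power of a coordinate orbit, deduce $\Gamma=(\Gamma_1)^\ell$ from vertex-transitivity of $M$ on both graphs, and kill $2$-arc-transitivity with the coordinate-counting invariant (the paper phrases this as: no element of $K_{\alpha_1,\beta_1}\wr\sy\ell$ maps $(\beta_1,\ldots,\beta_1,\beta_2)$ to $(\beta_1,\ldots,\beta_1,\beta_2,\beta_2)$, which is exactly your Hamming-distance argument). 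You are in fact more careful than the paper at one point: you verify that the $N$-orbital through $(\alpha_1,\beta_1)$ is self-paired, so that $\Gamma_1(\alpha_1)$ is exactly $\beta_1 N_{\alpha_1}$ and not larger.

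The one place where you assert rather than prove is the claim that ``normality then forces $M=N^\ell$''. Normality gives $M=\comp M1\times\cdots\times\comp M\ell$ acting coordinate-wise, but it does not by itself say that the $\ell$ components are the \emph{same} subgroup of $\sym\Delta$; that identification is precisely what makes the $\ell$ coordinate orbits of $\beta_1$ coincide, and it is the step for which the paper supplies a dedicated argument: $M$-arc-transitivity forces $M$ to be non-regular, whence (by the lemma at the end of Section~\ref{sect:normalinc}) $\comp Mj$ is the \emph{unique} transitive minimal normal subgroup of $\comp Gj$, and since all $\comp Gj$ equal $K$ after the conjugation, all $\comp Mj$ equal $\comp M1$. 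Your gap is fillable even without that lemma: for $g\in G\leq K\wr(G\pi)$ with $g\pi$ mapping $j$ to $j'$, conjugation by $g$ carries $\comp Mj$ onto $\comp M{j'}$, and it does so by conjugating by a coordinate of $g$, which lies in $K$; since $\comp Mj\unlhd\comp Gj=K$, a normal subgroup of $K$ is fixed by such conjugation, so $\comp M{j'}=\comp Mj$, and transitivity of $G\pi$ then makes all components of $M$ equal. Either way, this argument (or an appeal to the paper's uniqueness lemma) must be inserted where you write ``normality then forces''; with it, your proof is complete and coincides in substance with the paper's.
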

\begin{proof}
Since the inclusion of $G$ into $W$ is normal, we find that $G\pi$ is 
transitive (Proposition~\ref{normalstab}),
and hence we may assume without loss of generality by Theorem~\ref{comps} that 
$G\leq\comp G1\wr(G\pi)$ and, by Lemma~\ref{lemmabeta}, that
there are $\alpha_1,\beta_1\in \Delta$ such that 
$\alpha=(\alpha_1,\ldots,\alpha_1)$ and $\beta=(\beta_1,\ldots,\beta_1)$ 
are adjacent in $\Gamma$. Since $M$ is arc-transitive, we find that
$M_\alpha$ is transitive in $\Gamma(\alpha)$. Further, 
by Proposition~\ref{normalstab}, $M_\alpha=\prod_j(\comp Mj)_{\alpha_1}$. 
As $\Gamma$ is $M$-arc-transitive, $M$ is not regular on $\Omega$. Thus
$\comp Mj$ is the unique minimal normal subgroup of $\comp Gj$. Since 
$\comp Gj=\comp G1$, we have that $\comp Mj=\comp M1$ for all $j$.
Thus
$$
\Gamma(\alpha)=\prod_{i=1}^\ell \beta_1 (\comp M1)_{\alpha_1}.
$$
If $\Gamma_1$ denotes the graph whose vertex set is $\Delta$ 
and whose edge set is the $\comp M1$-orbit containing $\{\alpha_1,\beta_1\}$
then, by, the remark before the proposition, we have that the neighborhoods
of $\alpha$ in $\Gamma$ and in $(\Gamma_1)^\ell$ are equal. Since $M$ 
acts vertex transitively on both $\Gamma$ and $(\Gamma_1)^\ell$, we find that
$\Gamma=(\Gamma_1)^\ell$.
As the valency of $\Gamma$ is at least 2, we obtain that
$\Gamma_1$ also has valency at least 2.
Hence if $\beta_2\in\ \beta_1 (\comp M1)_{\alpha_1}\setminus\{\beta_1\}$, then
both $(\beta_1,\ldots,\beta_1,\beta_2)$ and 
$(\beta_1,\ldots,\beta_1,\beta_2,\beta_2)$ are in 
$\Gamma(\alpha)\setminus\{\beta\}$, however no element of 
$K_{\alpha_1,\beta_1}\wr \sy\ell$ can map one of these points to the other.
Hence $G_\alpha$ is not 2-transitive on $\Gamma(\alpha)$ and so
$\Gamma$ is not $(G,2)$-arc-transitive.
\end{proof}




\begin{corollary}\label{nonnormcor}
Suppose that Hypothesis~\ref{hyp2} holds, that $M$ is not regular, and 
that the inclusion $G\leq W$ is normal. 
Then $\Gamma$ is not $(G,2)$-arc-transitive.
\end{corollary}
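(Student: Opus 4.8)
The plan is to obtain this statement as an essentially immediate consequence of Proposition~\ref{grnormal}. The only discrepancy between the hypotheses available here and those of that proposition is that Proposition~\ref{grnormal} assumes $\Gamma$ to be $M$-arc-transitive, whereas Corollary~\ref{nonnormcor} only supplies that $M$ is non-regular. So the first task is to upgrade non-regularity of $M$ to $M$-arc-transitivity of $\Gamma$, and the natural device for this is Lemma~\ref{marctrans}.

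I would argue by contradiction. Suppose that $\Gamma$ is $(G,2)$-arc-transitive. As observed in the paragraph preceding Lemma~\ref{marctrans}, since $G$ is transitive on $\ver\Gamma$ this is equivalent to $G_\alpha$ acting $2$-transitively on the neighborhood $\Gamma(\alpha)$ for each $\alpha\in\ver\Gamma$. A $2$-transitive permutation group is in particular primitive, and hence quasiprimitive; thus $G_\alpha$ is quasiprimitive on $\Gamma(\alpha)$. This is precisely the extra ingredient needed to apply Lemma~\ref{marctrans}.

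With $M$ non-regular by hypothesis and $G_\alpha$ quasiprimitive on $\Gamma(\alpha)$, Lemma~\ref{marctrans} applies and gives that $\Gamma$ is $M$-arc-transitive. Now both hypotheses of Proposition~\ref{grnormal} hold, namely that $\Gamma$ is $M$-arc-transitive and that the inclusion $G\leq W$ is normal, so that proposition yields that $\Gamma$ is \emph{not} $(G,2)$-arc-transitive. This contradicts our assumption, and the corollary follows.

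I expect no substantial obstacle: the whole content of the argument is the routing of non-regularity of $M$ through Lemma~\ref{marctrans} into Proposition~\ref{grnormal}. The only point meriting attention is the elementary observation that $2$-transitivity of $G_\alpha$ on $\Gamma(\alpha)$ furnishes the quasiprimitivity required by Lemma~\ref{marctrans}; it is exactly this step that lets us dispense with assuming $M$-arc-transitivity at the outset and replace it by the weaker hypothesis that $M$ is non-regular.
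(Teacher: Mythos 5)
Your proposal is correct and follows essentially the same route as the paper: assume $(G,2)$-arc-transitivity, deduce that $G_\alpha$ is $2$-transitive (hence quasiprimitive) on $\Gamma(\alpha)$, invoke Lemma~\ref{marctrans} with the non-regularity of $M$ to get $M$-arc-transitivity, and then contradict Proposition~\ref{grnormal}. The paper's proof is just a terser version of this, leaving the appeal to Lemma~\ref{marctrans} implicit in the word ``hence.''
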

\begin{proof}
Let $\alpha$ be a vertex of $\Gamma$. 
If $\Gamma$ is $(G,2)$-arc-transitive, then $G_\alpha$ is $2$-transitive on
$\Gamma(\alpha)$ and hence $\Gamma$ is $M$-arc-transitive; by Proposition~\ref{grnormal}, this is
a contradiction. Thus $\Gamma$ is not $(G,2)$-arc-transitive.
\end{proof}

We note that if $G$ is a quasiprimitive group of type {\sc Cd}, then
its unique minimal normal subgroup $M$ is  not regular and $M_\omega$ is 
an $\S$-subgroup with respect to the direct product decomposition of $M$
determined by the simple factors of $M_\omega$.
Hence By Corollary~\ref{nonnormcor}, for such a $G$, there exist
no $(G,2)$-arc-transitive graphs. This argument gives an alternative proof
of~\cite[Lemma~5.3(i)]{prae:quasi}.

Finally in this section we  exclude the case of abelian $M$.

\begin{proposition}\label{lem6}
Under Hypothesis~\ref{hyp2}, if $\Gamma$ is $(G,2)$-arc-transitive, then $M$ is non-abelian.
\end{proposition}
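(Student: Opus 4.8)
The plan is to argue by contradiction, so suppose that $M$ is abelian. Being an abelian minimal normal subgroup, $M$ is an elementary abelian $p$-group, and being abelian and transitive it is regular; thus $|M|=|\Omega|=p^d$ for some $d$. Identifying $\Omega$ with $V:=M\cong\mathbb F_p^{\,d}$ via the regular action and taking $\alpha$ to be the zero vector, I would write $G=V\rtimes G_\alpha$ with $G_\alpha\leq\gl d p$, so that $G$ is an affine group. Since $M=V$ is a minimal normal subgroup of $G$ and every subspace is normalized by the abelian group $V$, the only $G$-invariant subspaces of $V$ are $0$ and $V$; that is, $G_\alpha$ acts irreducibly on $V$ and $G$ is primitive of type HA. If $\Gamma$ were $(G,2)$-arc-transitive, then $G_\alpha$ would be $2$-transitive on $\Gamma(\alpha)$, and writing $S=\Gamma(\alpha)\subseteq V\setminus\{0\}$ I would record that $S=-S$ (as $\Gamma$ is undirected) and $\langle S\rangle=V$ (as $\Gamma$ is connected). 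The whole difficulty is then to contradict the existence of such an $S$.

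Next I would extract the module decomposition forced by the inclusion $G\leq W$. By Lemma~\ref{minbase}, $M\leq(\sym\Delta)^\ell$, and by Theorem~\ref{comps}(1) each component $\comp Mj$ is transitive on $\Delta$; being abelian it is regular, so $|\comp Mj|=|\Delta|=p^e$ with $e=d/\ell$. Comparing orders in $M\leq\comp M1\times\cdots\times\comp M\ell$ gives equality, so $V=V_1\oplus\cdots\oplus V_\ell$ with $V_j:=\comp Mj$ a subspace of dimension $e$, and $G_\alpha$ permutes the summands $V_j$ through the projection $G\to G\pi\leq\sy\ell$. Irreducibility of $G_\alpha$ forces $G\pi$ to be transitive on $\underline\ell$: otherwise the sum of the $V_j$ over a single $G\pi$-orbit would be a proper nonzero $G_\alpha$-invariant subspace. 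Thus $G$ is a primitive HA group admitting a product-action inclusion, $G_\alpha\leq\gl e p\wr\sy\ell$ acts as a block-monomial (imprimitive linear) group on $V$, and the situation is exactly the kind of primitive inclusion analysed in~\cite{prae:incl}.

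Because $G_\alpha\leq\gl e p\wr\sy\ell$ permutes the summands and acts invertibly within each, it preserves the support size $|\{j:v_j\neq0\}|$ of a vector; as $S$ is a single $G_\alpha$-orbit, all its elements share one support size $m$. The case $m=1$ I can settle directly: by transitivity of $G\pi$, $S$ meets every $V_j$; if some $V_j$ contains two elements of $S$, then the stabilizer in $G_\alpha$ of one of them fixes the coordinate $j$ and so cannot carry the second element into an element of $S$ lying in another $V_{j'}$, contradicting $2$-transitivity; while if each $V_j$ meets $S$ in a single vector, then $\langle S\rangle$ has dimension at most $\ell$, forcing $e=1$, and then $S=-S$ forces $p=2$, so that $V=\mathbb F_2^{\,\ell}$ is the reducible permutation module for $G\pi\leq\sy\ell$, against irreducibility. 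For $m\geq2$, $2$-transitivity forces the supports of the elements of $S$ either all to coincide (so $m=\ell$) or to form a family of $m$-subsets with constant pairwise intersection; here I would invoke the classification of affine $2$-arc-transitive graphs in~\cite{iva-prae}, together with the description of primitive HA-into-PA inclusions in~\cite{prae:incl}, to conclude that none of the modules supporting such a graph admits the block-monomial decomposition $V=V_1\oplus\cdots\oplus V_\ell$ produced above. I expect this last step to be the main obstacle: it is not a single clean combinatorial contradiction but a comparison against the Ivanov--Praeger list, and essentially all the work lies in the case $m\geq2$. Reaching a contradiction in every case then shows that $M$ must be non-abelian.
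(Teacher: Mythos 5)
Your setup reproduces the paper's opening moves exactly: abelian $M$ forces $G$ to be primitive of affine type with $G_\alpha$ irreducible on $V=M$, and Lemma~\ref{minbase} together with Theorem~\ref{comps}(1) and an order count gives $M=\comp M1\oplus\cdots\oplus\comp M\ell$ with the summands permuted transitively, i.e.\ $G_\alpha$ is an \emph{imprimitive} linear group. Up to that point you are on the paper's track, and your direct treatment of the support-one case is correct (though, as it turns out, not needed).

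The genuine gap is the step you yourself flag: for $m\geq 2$ you ``invoke'' \cite{iva-prae} and \cite{prae:incl} to conclude that no module in the Ivanov--Praeger classification admits the decomposition $V=V_1\oplus\cdots\oplus V_\ell$. That conclusion is not a quotable statement in either reference; it is precisely the content of the proposition, and it must be verified entry by entry. This is what the paper's proof actually consists of: one restricts to the lines of Table~1 of \cite{iva-prae} in which the group is primitive, and shows in each that the point stabilizer preserves no non-trivial direct sum decomposition. For $G_\alpha=\gl k2$ or $\sy{k+1}$ this is immediate, but for $G_\alpha=\pgammal m{2^a}$ acting in dimension $k=m^a$ one must show that $\psl m{2^a}$ cannot permute non-trivially a family of $\ell\leq m^a$ summands, which requires the minimal-degree bounds of \cite{cooperstein} (the minimal faithful permutation degree $(2^{ma}-1)/(2^a-1)$ exceeds $2^{(m-1)a}>m^a$ when $(m,2^a)\neq(4,2)$, and equals $8>4$ for $\SL 42$); and for $G_\alpha\cong\mat{23}$ or $\mat{22}.2$ one needs that these groups have no faithful transitive action on at most $11$, respectively $10$, points. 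None of this case analysis appears in your proposal, and your combinatorial observation about supports with constant pairwise intersection is never used, so it does not substitute for it. As written, the proof stops exactly where the real work begins.
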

\begin{proof}
Suppose that $M$ is abelian. 
Then $G$ is an affine primitive
permutation group and, for $\alpha\in\Omega$,
 $G=M\rtimes G_\al$ where $M$ is viewed as a $k$-dimensional vector space 
over $\mathbb F_p$  (see~\cite[Section~3]{bam-prae}).
Hence in this proof
we use additive notation in $M$.
Further, $M$, viewed as an $G_\alpha$-module, is irreducible.
By Lemma~\ref{minbase}, $M\leq(\sym\Delta)^\ell$ and so $M\leq \comp M1\oplus\cdots\oplus\comp M\ell$.
As each component $\comp Mj$ is transitive, and hence  regular,
on $\Delta$, we find that $|\comp Mj|=|\Delta|$. Since
$|M|=|\Omega|=|\Delta|^\ell=\prod_j|\comp M j|$,
we obtain that $M=\comp M1\oplus\cdots\oplus\comp M\ell$
and the components $\comp Mj$
are permuted transitively by $G_\alpha$.  In other words, $G_\alpha$
preserves a direct sum decomposition of $M$ into $\ell$ components;
that is, $G_\alpha$ is primitive as a linear group acting on $M$.

The class of $(G,2)$-arc-transitive
graphs with primitive affine  groups $G$ were classified
by~\cite{iva-prae}. We need to consider those lines in~\cite[Table~1]{iva-prae}
in which the corresponding group is primitive; that is,
the entry in the ``Prim'' column is ``P''.
We will show for these lines that a point stabilizer
$G_\al$ does not preserve a non-trivial direct sum decomposition of $M$.

In Lines 1--2, $G_\alpha$ is  $\gl k2$ or $\sy{k+1}$
and these groups do not preserve non-trivial
direct sum decompositions.
In Line 7, $G_\alpha=\pgammal m{2^a}$ acting in dimension $k=m^a$ with $m\geq 3$
and $a\geq 1$.
By the discussion in~\cite[(1.3)]{iva-prae}, $M$ is an irreducible
module for $\psl m{2^a}$. We claim that $M$ is a primitive  
$\psl m{2^a}$-module. If $M$ were imprimitive, then $\psl m{2^a}$ would permute
 non-trivially
an imprimitivity decomposition $M=V_1\oplus\cdots\oplus V_\ell$ with 
$\ell\leq m^a$. On the other hand, if $(m,2^a)\neq (4,2)$, 
the minimal degree permutation representation
of $\psl m{2^a}$ is on the 1-spaces of the natural module and has
degree $(2^{ma}-1)/(2^a-1)>2^{(m-1)a}$, while the minimal degree
of a permutation representation of $\SL 42$ is 8; 
see~\cite[Table~1]{cooperstein}. 
As $m\geq 3$, we have that $m<2^{m-1}$, and so $m^a<2^{(m-1)a}$. Further,   
$4^1<8$, and so we obtain that $\psl m{2^a}$ cannot
be imprimitive in these cases.

We have, in the remaining two lines,
that $G_\alpha\cong M_{23}$ or $G_\alpha\cong M_{22}.2$; these lines
can be be handled similarly.
If the underlying module of $M_{23}$ were imprimitive, 
then $M_{23}$ would be acting on 11 points transitively, which is impossible. 
If the underlying module for $M_{22}.2$ were imprimitive, then either
$M_{22}$ would be acting in dimension less then $10$ or it would be 
acting transitively on at most 10 points; both of these possibilities 
are clearly impossible by~\cite{Atlas}.
\end{proof}

\section{Simple plinth}\label{sect:simpleplinth}

In this section we will consider the situation
described in~Hypothesis~\ref{hyp2} under the additional condition that
$M=T$ is a non-abelian simple group. Unlike in the other cases, here 
we  find two examples. Further, we show that these examples
are the only possibilities.

\begin{theorem}\label{simpleplinth}
Suppose that Hypothesis~\ref{hyp2} holds. Assume, further that $M=T$ is a
non-abelian simple group and that $\Gamma$ is $(G,2)$-arc-transitive.
Then 
one of the following is valid.
\begin{enumerate}
\item $T=\alt 6$, $|\aut\alt 6:G|\in\{1,2\}$,
  $G\neq\pgl 29$, and $|\ver \Gamma|=36$, $\Gamma$ is 
Sylvester's Double Six Graph
of valency $5$.
\item $T=\sp 44$, $G=\aut\sp 44$, $|\ver\Gamma|=14,400=120^2$, and $\Gamma$ is a graph of valency $17$.
\end{enumerate}
In both cases, $G$ is quasiprimitive.
\end{theorem}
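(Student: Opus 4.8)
The plan is to combine the structure theory of Section~\ref{sect:cs} with a detailed analysis of the local action of $G_\alpha$ on a neighbourhood $\Gamma(\alpha)$. Since $\Gamma$ is $(G,2)$-arc-transitive and $G$ is vertex-transitive, $G_\alpha$ induces a $2$-transitive group on $\Gamma(\alpha)$; as a $2$-transitive group is primitive, hence quasiprimitive, and as a valency-$2$ graph is a cycle with soluble automorphism group (which cannot contain the transitive non-abelian simple $M$), the valency is at least $3$. By Theorem~\ref{c2}(1), $G\pi$ has at most two orbits on $\underline\ell$; I first treat the transitive case. Because $M=T$ is simple, every component $\comp Mj$ is a non-trivial quotient of $T$ and hence equals $T$, so $T$ lies in all $\ell$ components and the integer $s$ of Theorem~\ref{types} equals $\ell$. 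Thus Theorem~\ref{types}(1) gives $\ell\in\{2,3\}$, and since type $\cd{1S}$ forces $k$ to be even the inclusion has type $\cd{2\sim}$ (if $\ell=2$) or $\cd 3$ (if $\ell=3$); by Theorem~\ref{c2}(2)--(3), $T$ is one of the groups in columns 1--4 or 6--8 of Table~\ref{c2table}. Theorem~\ref{c2}(4) shows $M_\omega$ contains a copy of the non-trivial group $X$, so $M$ is non-regular; Lemma~\ref{marctrans} then makes $\Gamma$ $M$-arc-transitive, and Lemma~\ref{lemmabeta} lets me assume $\alpha=(\alpha_1,\dots,\alpha_1)$ and $\beta=(\beta_1,\dots,\beta_1)$ are adjacent with $G\le \comp G1\wr(G\pi)$ and $M_\alpha$ transitive on $\Gamma(\alpha)$.

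The heart of the argument is a local structure lemma: for $\ell=2$, $\Gamma(\alpha)$ is the graph of an involution. Writing $\Sigma=\beta_1(\comp G1)_{\alpha_1}$, Theorem~\ref{comps} gives $\Gamma(\alpha)\subseteq \Sigma\times\Sigma$. Since $G\pi=\sy 2$ is transitive, $G_\alpha$ contains an element inducing the coordinate swap, so the symmetric relation ``agree in at least one coordinate'' on $\Gamma(\alpha)$ is $G_\alpha$-invariant; by $2$-transitivity it is trivial or universal, and a short combinatorial argument (two point-separating partitions every pair of which lies in a common block of one of them must, together with the swap, collapse the neighbourhood to a single point) rules out the universal case once the valency is at least $3$. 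Hence distinct neighbours of $\alpha$ differ in both coordinates, both projections are injective, and the swap forces $\Gamma(\alpha)=\{(x,x\phi)\mid x\in N_1\}$ for an involution $\phi$ of a set $N_1\subseteq\Sigma$ of size equal to the valency. Projecting to the first coordinate identifies the $2$-transitive action of $G_\alpha$ on $\Gamma(\alpha)$ with a $2$-transitive action of $G_\alpha$ on $N_1$, in which the normal subgroup $M_\alpha$ (its isomorphism type squeezed between $X$ and $Y$ by Theorem~\ref{c2}(4)) acts transitively. I record the analogous restriction of $\Gamma(\alpha)$ to a coordinate-injective subset of $\Sigma^3$ for $\ell=3$.

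The problem now reduces to deciding, for each candidate $T$, whether the prescribed $M_\alpha$ can sit normally in a $2$-transitive group $G_\alpha$ of degree $v=|N_1|$, with $|G_\alpha:M_\alpha|=|G:M|$ bounded by $|\mathrm{Out}(T)|$. For $T=\alt 6$ one has $M_\alpha=\dih{10}$ acting regularly on $N_1$ with $v=5$; the $2$-transitive overgroups of $\dih{10}$ of degree $5$ (a kernel being permitted) are $\mathrm{AGL}(1,5)$ and its index-$2$ extension, giving $G\in\{\sy 6,\mat{10},\aut\alt 6\}$ and excluding $\pgl 29$, whose point stabiliser is a non-Frobenius group of order $20$. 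For $T=\sp 4{2^a}$ the factorisation underlying $\cd{2\sim}$ makes $|M_\alpha|=4(2^{2a}+1)$ and $v=2^{2a}+1$; the bound $|G_\alpha|\le 2a\,|M_\alpha|$ forces $2^{2a}\le 8a$, whence $a=2$, $T=\sp 44$, $v=17$, $G_\alpha=\mathrm{AGL}(1,17)$ and $G=\aut\sp 44$. The other candidates fall: $\pomegap 8q$ because $\mathrm{G}_2(q)=M_\alpha$ is never a normal subgroup of a $2$-transitive group; $\mat{12}$ because the swap turns $M_\alpha=\psl 2{11}$ into $\pgl 2{11}$, which has no transitive action of degree $11$; and the $\cd 3$ columns because $\sp{4a}2$ and $\sp 62$ have no outer automorphism of order divisible by $3$ (so the three components cannot be genuinely permuted with $M$ staying transitive on $\Delta^3$), while $\pomegap 83$ admits no $2$-transitive overgroup of $C_6\rtimes\sy 4$ of the requisite degree. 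The two-orbit case of Theorem~\ref{c2}(1) is handled by the same neighbourhood analysis with the two coordinate partitions separately invariant, producing no new graph. Finally, in each surviving case $M_\alpha=T_\omega$ is self-normalising in $T$, so $\cent G T=1$, $G$ is almost simple and hence quasiprimitive, and the two graphs are identified and shown unique via the generalized quadrangle of $\sp 4q$, $q=2,4$.

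The main obstacle is precisely the candidate elimination of the third paragraph. Ruling out the infinite families $\sp 4{2^a}$ ($a\ge 3$), $\pomegap 8q$ and $\sp{4a}2$, and the delicate $\mat{12}$ case, rests on the classification of finite $2$-transitive groups together with precise facts about maximal factorisations and about how outer automorphisms act on the relevant point stabilisers; pinning down exactly which overgroups $G$ of $\alt 6$ survive (and why $\pgl 29$ does not) requires the kernel-permitting reading of the local $2$-transitivity. A secondary difficulty is the $\ell=3$ analysis, where $\Gamma(\alpha)$ no longer collapses to the graph of a single bijection, so the combinatorial reduction and the accompanying elimination must be carried out with more care.
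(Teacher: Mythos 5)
Your overall strategy---reduce to a short list of pairs $(T,T_\alpha)$ and then decide locally whether $T_\alpha$ can be a transitive normal subgroup of a $2$-transitive group $G_\alpha$---is essentially the paper's strategy (the paper gets its list $T\in\{\alt 6,\mat{12},\sp 4{2^a},\pomegap 8q\}$, with the precise stabilizers $T_\alpha$, directly from the classification of transitive \emph{simple} subgroups of wreath products in product action, rather than from Theorem~\ref{c2}). But your execution has genuine gaps. First, your derivation of the candidate list is incomplete: for $\ell=2$ the parity remark only excludes type $\cd{1S}$, not $\cd{2\not\sim}$; the fact that a simple plinth with transitive $G\pi$ forces type $\cd{2\sim}$ is a cited theorem (see the remark after Theorem~\ref{c2} in Section~\ref{sect:strth}), which you neither prove nor invoke, and without it Table~\ref{c2table} does not apply. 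Second, and decisively, in the case $T=\sp 4{2^a}$ you assert $v=2^{2a}+1$ with no argument. This is the technical heart of the paper's proof: one shows that the cyclic subgroup $Z\leq T_\alpha$ of order $q^2+1$ satisfies $Z\cap Z^x=1$ for any $x$ carrying $\alpha$ to a neighbour (via primitive prime divisors of $q^4-1$, the Singer cycle, and self-normalization of $T_\alpha$), so that $Z$ is faithful on $\Gamma(\alpha)$, hence---being normal in the $2$-transitive $G_\alpha$---transitive and regular, whence $|\Gamma(\alpha)|=q^2+1$. Without this, the valency could a priori be much smaller (for instance related to a proper prime divisor of $q^2+1$, with a kernel on $\Gamma(\alpha)$), and your inequality $2^{2a}\leq 8a$ has no force against $a\geq 3$, so the infinite family is not eliminated.

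Third, two of your eliminations are factually wrong as stated. For $T=\mat{12}$ you argue that $\pgl 2{11}$ has no transitive action of degree $11$, but $\pgl 2{11}$ does have a $2$-transitive action of degree $12$ in which the normal subgroup $\psl 2{11}$ is transitive, so the local normal-subgroup analysis alone cannot kill this case. For $\pomegap 8q$ with $q=2$, your claim that $G_2(q)$ is never a normal subgroup of a $2$-transitive group fails: $G_2(2)\cong\psu 33.2$ is itself $2$-transitive of degree $28$. The paper disposes of both cases---and also verifies which overgroups of $\alt 6$ actually admit a suitable suborbit (your claim that $\dih{10}$ acts regularly on the $5$ neighbours is itself off, since $|\dih{10}|=10$), as well as the existence and connectedness of the two surviving graphs---by explicit GAP/Magma computation of the suborbits. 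Your proposal contains no substitute for these computations, and the existence half of the statement (that the $\alt 6$ and $\sp 44$ graphs really occur) is nowhere established.
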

\begin{proof}
Let $\alpha\in\ver\Gamma$. 
By~\cite[Theorem~6.1]{cs}, 
$T_\alpha$ is non-trivial.
As $\Gamma$ is connected and $G_\alpha$ is $2$-transitive,
and in particular quasiprimitive,
on $\Gamma(\alpha)$,
Lemma~\ref{marctrans} gives that
$\Gamma$ is $T$-arc-transitive. In particular $\Gamma(\alpha)$ is a 
$T_\alpha$-orbit. As $T$ is simple, 
the possibilities for $T$ and $T_\alpha$ are given 
in~\cite[Theorem~6.1]{cs}. In particular, $T$ is isomorphic to
one of the groups $\alt 6$, $\mat{12}$, $\sp 4{2^a}$, with $a\geq 2$, or
$\pomegap 8q$. We analyze each of these cases below.

$T=\alt 6$, $|\ver\Gamma|=36$: In this case  calculation with GAP~\cite{GAP}
or Magma~\cite{Magma}
can show that $G$ has a suborbit of size $5$ on which $G_\alpha$ acts 
2-transitively if and only if $\sy 6\leq G$. 
The corresponding 2-arc-transitive
graph is undirected, connected  and is of valency 5 as described in item~(1).

$T=\mat{12}$ and $|\ver\Gamma|=144$: Calculation with GAP~\cite{GAP}
or Magma~\cite{Magma} shows that $T$ has no suborbit on which
$G_\alpha$ is 2-transitive and the corresponding
graph is connected. Hence no graph arises in this case.

$T=\sp{4}{q}$ with $q=2^a$ and $a\geq 2$: The group  $G$ is contained in
$\aut T\cong \sp 4q.C_a.C_2$ where $C_a$ is the group of field automorphisms
and $C_2$ is generated by a graph automorphism.
Further, $T_\alpha=Z\left<\sigma\right>$ 
where $Z$ is a subgroup
of order $q^2+1$ of a Singer cycle and $\sigma$ is an element of order 4; 
see~\cite[Lemma~5.2]{cs}. Let $\gamma\in\Gamma(\alpha)$ and let $x\in T$ such that
$\gamma=\alpha x$. 

We claim that 
$Z\cap Z^x=1$. Suppose that $1\neq Z_1=Z\cap Z^x$. Then 
$Z_1$ is cyclic of odd order and suppose that $R$ is a cyclic subgroup of $Z_1$
with odd prime order $r$. Then $r\mid q^2+1$, and so $r\nmid q^2-1$, and hence 
$r\nmid q-1$ and $r\nmid q^3-1=(q-1)(q^2+q+1)$. Thus $r$ is a primitive
prime divisor of $q^4-1$ and hence the centralizer of $R$ in $\gl 4q$ is a full 
Singer cycle $C_{q^4-1}$~\cite[7.3 Satz]{huppert}. 
Hence $Z$ and $Z^x$ are subgroups of the same
$C_{q^4-1}$, which 
implies that $Z=Z^x$. Since $T_\alpha=\norm {T}{Z}$ and $T_\gamma=\norm T{Z^x}$ we find that $T_\alpha=T_\gamma$.
As $T_\alpha$ is 
self-normalizing in $T$, we obtain that $\alpha=\gamma$ which is 
a contradiction as $\gamma\in\Gamma(\alpha)$ is assumed. 
This shows that $Z\cap Z^x=1$ as claimed. 

As $Z\cap Z^x=1$,  $Z$ is faithful on
$\Gamma(\alpha)$. As $Z$ is normal in $G_\alpha$ and $G_\alpha$ is 2-transitive
on $\Gamma(\alpha)$, $Z$ is transitive, and hence regular, on $\Gamma(\alpha)$. 
Thus $|\Gamma(\alpha)|=q^2+1$. As  $|G_\alpha|\leq 8a(q^2+1)$,
and $G_\alpha$ is 2-transitive on $\Gamma(\alpha)$, it follows that
$a=2$. If $a=2$, then  there exists a corresponding 2-arc-transitive
graph $\Gamma$, which is given in item~(2).

$T=\pomegap 8q$: In this case $T_{\alpha}=G_2(q)$. If $q\geq 3$, then
$G_2(q)$ does not have a 2-transitive permutation representation. As is 
well-known, $G_2(2)\cong \psu 33.2$, and $\psu 33.2$ has a 2-transitive 
representation of degree $q^3+1=28$. However, computation with 
GAP~\cite{GAP} or Magma~\cite{Magma} shows that $T_\alpha$
has no suborbit of size $28$, and hence this case does not arise.
\end{proof}

Computation with GAP~\cite{GAP} or Magma~\cite{Magma} 
can verify that the 2-arc-transitive graphs with $T=\alt 6$ and $T=\sp 44$
do in fact exist. The graph with $T=\alt 6$ is also known as Sylvester's Double 
Six Graph, it has 36 vertices and valency 5 
(see~\cite[13.1.2~Theorem]{distancebook}). Its full automorphism group
is $\aut\alt 6\cong\pgammal 29$. The graph with $T=\sp 44$ has $120^2=14,400$ 
vertices and valency 17. Both graphs that appear in this context can be 
described using the generalized quadrangle associated with the 
non-degenerate alternating form stabilized by $\sp 4q$, but giving such a 
description is beyond the scope of the present paper.

\section{Non-simple plinth}\label{sec:nonsimple}

We continue working under Hypothesis~\ref{hyp2}. 
We know from Proposition~\ref{lem6} that if
$\Gamma$ is $(G,2)$-arc-transitive, then
$M$ is non-abelian. Further, the situation when $\Gamma$ is $(G,2)$-arc-transitive and $M$ is simple is fully
described in Section~\ref{sect:simpleplinth}.
Hence from now of we will focus on the case when $M$ is non-abelian
and non-simple.
We will also assume that $M$ is not regular, as for the case
when $M$ is regular, the graph will be a Cayley graph; see~\cite{baddeleytw1}.

We note that, using~\cite[Theorem~1.3]{liseresssong},
our analysis could by somewhat simplified if we assumed that $G$ 
was quasiprimitive. However, we opt for keeping our general conditions set
up in~Hypothesis~\ref{hyp2}.

\begin{theorem}\label{nonsimple}
Suppose that Hypothesis~\ref{hyp2} is valid, that $\Gamma$ is $(G,2)$-arc-transitive
and that $M$ is non-regular and non-simple.
Then $G\pi$ is transitive and 
the inclusion $G\leq W$ is of type $\cd{2\not\sim}$.
\end{theorem}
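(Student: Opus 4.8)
The plan is to show first that $\Gamma$ is $M$-arc-transitive, and then to eliminate every possibility other than $\cd{2\not\sim}$ by exploiting the elementary fact that a $2$-transitive permutation group cannot preserve a nontrivial cartesian decomposition of the set on which it acts (product action always has rank at least $3$). Since $\Gamma$ is $(G,2)$-arc-transitive, the stabilizer $G_\alpha$ is $2$-transitive, hence quasiprimitive, on the neighborhood $\Gamma(\alpha)$; as $M$ is non-regular, Lemma~\ref{marctrans} yields that $\Gamma$ is $M$-arc-transitive, so $M_\alpha$ is transitive on $\Gamma(\alpha)$ for every vertex $\alpha$. A normal inclusion is ruled out at once by Corollary~\ref{nonnormcor}, so $s\geq 2$, and by Theorem~\ref{c2}(1) the image $G\pi$ has at most two orbits on $\underline\ell$. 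Thus it remains to derive a contradiction in the two-orbit case and in the transitive cases of types $\cd{1S}$, $\cd{2\sim}$ and $\cd 3$; once these are excluded, $G\pi$ is transitive and, by Theorem~\ref{types} together with the definitions of the types, the inclusion must be of the remaining type $\cd{2\not\sim}$.

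In each of these cases Theorem~\ref{c2}(4) places $M_\alpha$ between two $\S$-subgroups $X^{k_1}$ and $Y^{k_1}$, with $X$ and $Y$ read off from the relevant column of Table~\ref{c2table}; moreover $k_1\geq 2$ except when the type is $\cd{1S}$ with $M\cong T^2$. Consider first the columns with $X=Y$ (columns $1,3,4,5,6,7$), so that $M_\alpha=X^{k_1}$ is itself an $\S$-subgroup. Since $M_\alpha$ is transitive on $\Gamma(\alpha)$ and factors as a direct product of the $k_1\geq 2$ isomorphic subgroups $X$, which $G_\alpha$ permutes among themselves, the orbit $\Gamma(\alpha)=\beta M_\alpha$ decomposes as a $G_\alpha$-invariant cartesian product of the corresponding factor orbits. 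When $G\pi$ is transitive these $k_1$ factor orbits form a single $G_\alpha$-orbit and are all nontrivial (as $M_\alpha$ is transitive of valency at least $2$), so there are at least two nontrivial factors and $G_\alpha$ preserves a nontrivial cartesian decomposition of $\Gamma(\alpha)$, contradicting its $2$-transitivity; in the two-orbit case the same conclusion follows after a minor additional argument, using connectivity to exclude the degenerate possibility that only one coordinate orbit acts nontrivially. This disposes of the two-orbit case for columns $1,3,4,5$ and of types $\cd{2\sim}$ and $\cd 3$, as well as of type $\cd{1S}$ whenever $k_1\geq 2$.

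There remain the two columns with $Y=X\cdot 2$, namely $T\cong\sp 4{2^a}$ (column $2$) and $T\cong\sp 62$ (column $8$). Let $R=\prod_i R_i\leq M_\alpha$, where $R_i$ is the characteristic $2'$-subgroup of odd order $2^a+1$, respectively $3$, in each of the $k_1$ factors; then $R$ is characteristic in $M_\alpha\unlhd G_\alpha$, hence normal in $G_\alpha$, and nontrivial. As $G_\alpha$ is primitive on $\Gamma(\alpha)$, the subgroup $R$ is either transitive or trivial there. If $R$ is transitive, the cartesian-product argument of the previous paragraph applies to $R$ and again contradicts $2$-transitivity. If $R$ is trivial on $\Gamma(\alpha)$, write $M_\alpha^{[1]}$ for the pointwise stabilizer of $\Gamma(\alpha)$ in $M_\alpha$; then $R\leq M_\alpha^{[1]}$, so the local group $M_\alpha/M_\alpha^{[1]}$ is a quotient of $M_\alpha/R\leq Y^{k_1}/R$, which is a $2$-group. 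By $M$-arc-transitivity the local group is a $2$-group at every vertex $\gamma$, whence every odd-order element of $M_\gamma$ fixes $\Gamma(\gamma)$ pointwise. Taking a nontrivial element $x\in R$ of odd order and using the connectivity of $\Gamma$, one concludes that $x$ fixes every vertex, contradicting the faithfulness of $M$ on $\Omega$. This eliminates columns $2$ and $8$.

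The one configuration not reached by this dichotomy is type $\cd{1S}$ with $M\cong T^2$, where $k_1=1$ and there is no nontrivial cartesian factorization to exploit; this is the main obstacle. Here $T$ is one of the four groups in columns $1$–$4$, the stabilizer $M_\alpha$ is a (possibly $C_2$-extended) diagonally embedded copy of $X$ in $T\times T$, and $G_\alpha$ is $2$-transitive on $\Gamma(\alpha)$ with $M_\alpha$ as a transitive normal subgroup. I would dispose of each of these finitely many explicit possibilities by a direct analysis of the resulting $2$-transitive action, in the style of the simple-plinth argument of Theorem~\ref{simpleplinth} (checking by hand, or by a short computer calculation, that no connected $(G,2)$-arc-transitive graph occurs). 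With this last case settled, the only surviving possibility is that $G\pi$ is transitive and the inclusion $G\leq W$ has type $\cd{2\not\sim}$, as claimed.
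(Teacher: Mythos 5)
Your skeleton (rule out normal inclusions by Corollary~\ref{nonnormcor}, then do casework through Theorem~\ref{c2}, then kill columns $2$ and $8$ of Table~\ref{c2table} with the odd-order characteristic subgroup and a connectivity argument) runs parallel to the paper's proof, and your ``trivial on $\Gamma(\alpha)$'' branch is essentially the paper's closing argument. But the step you lean on twice is a genuine gap: it is simply not true that if $M_\alpha=X_1\times\cdots\times X_{k_1}$ is transitive on $\Gamma(\alpha)$ and $G_\alpha$ permutes the factors, then $\Gamma(\alpha)=\beta M_\alpha$ ``decomposes as a $G_\alpha$-invariant cartesian product of the factor orbits.'' An orbit of a direct product is a cartesian product of factor orbits only when the point stabilizer factorizes, i.e.\ $(M_\alpha)_\beta=\prod_i (X_i)_\beta$; nothing in your hypotheses excludes, say, a diagonal arc stabilizer (take $X\times X$ acting on the cosets of the diagonal subgroup: this action is transitive, but the orbit is far smaller than the product of the two factor orbits). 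Knowing that the \emph{vertex} stabilizer $M_\alpha$ is an $\S$-subgroup says nothing directly about the \emph{arc} stabilizer $(M_\alpha)_\beta$. Producing the product structure is exactly what the paper's machinery achieves: the $\S$-subgroup property feeds into Theorem~\ref{constrnormal} to give a normal inclusion $G\leq\sym\Xi\wr\sy{k_1}$, and then Lemma~\ref{lemmabeta} and Proposition~\ref{grnormal} (which use the cartesian structure of the whole vertex set $\Xi^{k_1}$, not just the factorization of $M_\alpha$) show $\Gamma\cong(\Gamma_1)^{k_1}$, whence Corollary~\ref{nonnormcor} applies. The same unsupported claim reappears in your transitive-$R$ subcase for columns $2$ and $8$; the paper instead notes that the image of $C\cong (C_{q+1})^{k_1}$ would be the socle of the affine $2$-transitive group $G_\alpha^{\Gamma(\alpha)}$, on which $G_\alpha$ acts as a \emph{monomial} linear group (it permutes the $k_1$ coordinate subgroups), and a monomial group is never transitive on the nonzero vectors.

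The second gap is the case you call the main obstacle, type $\cd{1S}$ with $M\cong T^2$. Your proposed resolution --- ``a direct analysis of finitely many explicit possibilities \ldots\ by a short computer calculation'' --- cannot work as stated, because the candidate groups include the two infinite families $T\cong\sp 4{2^a}$ and $T\cong\pomegap 8q$, so there are infinitely many configurations and no finite computation disposes of them. In fact the case is vacuous, and this is how the paper handles it: by the structure theory of $\cd{1S}$ inclusions (\cite[Theorem~6.1]{MR2186989}), a component stabilizer $(\comp M1)_\delta$ involves a full diagonal subgroup with support $\{T_{i_1},T_{i_2}\}$, while some simple factor $T$ must have proper projection, and such a $T$ is necessarily distinct from $T_{i_1}$ and $T_{i_2}$; hence $k\geq 4$ and $k_1=k/2\geq 2$, so the normal-inclusion argument (Theorem~\ref{constrnormal} plus Corollary~\ref{nonnormcor}) covers type $\cd{1S}$ after all. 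So the case you would settle by computation should instead be eliminated structurally, and without that elimination (or a genuine argument for the two infinite families) your proof is incomplete.
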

\begin{proof}
By Corollary~\ref{nonnormcor}, the inclusion $G\leq W$ is not normal. 
We need to show that the cases when $G\pi$ is intransitive, or 
the inclusion $G\leq W$ has type  $\cd{1S}$, $\cd{2\sim}$, or $\cd 3$
are impossible.
Assume, by contradiction, that one of these cases is valid
and Theorem~\ref{c2} applies. Let $\alpha$ be a vertex of $\Gamma$. 
Whenever, in 
Theorem~\ref{c2}, the stabilizer $M_\alpha$ is an $\S$-subgroup
with $k_1\geq 2$,
there exists, by Theorem~\ref{constrnormal}, a normal embedding $G\leq \sym\Xi\wr\sy{k_1}$.
If $k_1=k$, then $k_1\geq 2$ by the conditions of the theorem.
If the inclusion type is $\cd{1S}$, then, by the definition 
of this inclusion type, a component 
$\comp M1$ contains a full diagonal subgroup in $T_{i_1}\times T_{i_2}$, 
but there is also a simple factor $T$ of $M$ such that the projection
$(\comp M1)_\delta\overline T/\overline T$ is a proper subgroup of $T$.
Hence this $T$ is different from $T_{i_1}$ and $T_{i_2}$ and this implies
that $k_1\geq 2$ also in this case.
Hence in these cases, we obtain the desired
contradiction by Corollary~\ref{nonnormcor}.
Thus we are left with the possibilities of Theorem~\ref{c2}(1)--(4)
in which $M_\alpha$ is not an $\S$-subgroup. In these
cases $T\cong \sp nq$ where the possibilities for the inclusion type, $n$, 
$q$, and $k_1$ are summarized in Table~\ref{exceptions}.

\begin{center}
\begin{table}
$$
\begin{array}{|c|c|c|c|}
\hline 
\begin{array}{c}
\mbox{inclusion}\\\mbox{type}\end{array}& n & q & k_1  \\
\hline
\mbox{intransitive}
& 4 & 2^a\geq 4 & k \\
\hline
\cd{2\sim} & 4 & 2^a\geq 4 & k \\
\hline
\cd{3}  &6 & 2 & k\geq 3\\
\hline
\cd{1S} & 4 & 2^a\geq 4 & k/2\geq 2 \\
\hline
\end{array}
$$
\caption{}
\label{exceptions}
\end{table}
\end{center}

In each of the cases in Table~\ref{exceptions},
we have that
\begin{equation}\label{ineq}
  (\dih {q+1})^{k_1}\leq M_\alpha\leq (\dih {q+1}\cdot 2)^{k_1}.
  \end{equation}
Let $A$ and $B$ denote the subgroups of the left and the right hand side,
respectively, of~\eqref{ineq}.
Hence in each case we have that $A\leq M_\alpha\leq B$. 
Let $C$ denote the common Hall $2'$-subgroup of $A$ and $B$. 
Thus
$C$ is isomorphic to  $(C_{q+1})^{k_1}$.
By the conditions, $C$  is a subgroup of $M_\alpha$
and  $B=(\dih {q+1}\cdot 2)^{k_1}$.
For $i=1,\ldots,k_1$, we let $\sigma_i$ denote
the $i$-th coordinate projection $B\rightarrow \dih{q+1}\cdot 2$. 

\begin{claim}\label{dihnorms}
If $N$ is a normal subgroup of $G_\alpha$ contained in $C$, then
$N=(C_s)^{k_1}$ where $s$ is a divisor of ${q+1}$. Conversely, each such 
subgroup of $C$ is normal in $G_\alpha$.
\end{claim}
\begin{proof}
We have that $N\sigma_i\leq C_{q+1}$. The subgroup $C$, being characteristic
in $M_\alpha$, is normalized by $G_\alpha$, and
$G_\alpha$ preserves the direct decomposition $C=(C_{q+1})^{k_1}$.
Thus $N\sigma_i$ is independent
of $i$. Therefore $N\sigma_i$ is a
cyclic 
subgroup of order $s$, say,
of $C_{{q+1}}$, and hence $N$ is a subdirect subgroup of 
$(C_s)^{k_1}$. Suppose that $x_1\in C_s$. Then 
there is an element of the form $(x_1,\ldots,x_{k_1})$ in $N$ with $x_i\in C_s$.
Suppose that $x\in \dih{{q+1}}$ is an
involution that conjugates each element of $C_{q+1}$ to its inverse.
Since $M_\alpha$ contains $(\dih{{q+1}})^{k_1}$, 
$(1,x,\ldots,x)\in M_\alpha$,
and hence 
$$
(x_1,x_2,\ldots,x_{k_1})^{(1,x,\ldots,x)}=(x_1,x_2^{-1},\ldots,x_{k_1}^{-1})\in N,
$$
which implies that 
$$
(x_1,x_2,\ldots,x_{k_1})(x_1,x_2^{-1},\ldots,x_{k_1}^{-1})=(x_1^2,1,\ldots,1)\in N.
$$
Since $s$ is odd, the order $|x_1|$ of $x_1$ is coprime to $2$, which
gives that $(x_1,1,\ldots,1)\in N$.
This shows that $N$ contains $N\sigma_1$. The same argument shows that 
$N$ contains $N\sigma_i$ for all $i$, and hence $N=(C_s)^{k_1}$ as claimed.

A
subgroup of the form $(C_s)^{k_1}$ is normalized by $G_\alpha$, since 
 $(C_{{q+1}})^{k_1}$ is normalized by $G_\alpha$ (it is the unique
$2'$-subgroup
of $M_\alpha$) and $(C_s)^{k_1}$ is the largest subgroup of $(C_{{q+1}})^{k_1}$ 
with exponent $s$, and hence it is characteristic in $(C_{{q+1}})^{k_1}$ and 
normal in $G_\alpha$.
\end{proof}



Suppose that $\Gamma$ is a $(G,2)$-arc-transitive graph, as assumed in the
theorem.
We know that $Q=G_\alpha^{\Gamma(\alpha)}$ is a 2-transitive group. 
In particular $G_\alpha$ is primitive on $\Gamma(\alpha)$ and so $M_\alpha$ is
either transitive on $\Gamma(\alpha)$ or trivial. By Lemma~\ref{marctrans}, we obtain
that $M_\alpha$ is transitive.
Since $M_\alpha\unlhd G_\alpha$ and $M_\alpha$ is solvable,
we find that $Q$ is an affine 2-transitive group. 
In particular the socle $V$ of $Q$ is $V=\Z_p^d$ with some prime $p$ and 
$Q=V\rtimes Q_\beta=\Z_p^d\rtimes G_{\alpha,\beta}^{\Gamma(\alpha)}$. 

A linear group $H\leq\gl dp$ is said to be {\em monomial} if $H$ preserves 
a direct sum decomposition $V_1\oplus\cdots\oplus V_d$ of $V=(\mathbb F_p)^d$
with subspaces of dimension 1. If $H$ is monomial, then $H$ cannot be 
transitive on $V\setminus\{0\}$, and hence $V\rtimes H$ cannot
be 2-transitive on $V$. In particular, the linear group $Q_\beta$ above is not
monomial.

Set $\kr G\alpha$ to be the kernel of $G_\alpha$ in its action on 
$\Gamma(\alpha)$. 
Define $\kr M\alpha$
accordingly and note that $\kr M\alpha$ is normal in $G_\alpha$.
Recall that $C=(C_{q+1})^{k_1}$ as defined above and that $C$ is normal in $G_\alpha$.

\begin{claim}\label{claim2}
$C\leq \kr M\alpha$.
\end{claim}
\begin{proof}
Let $C_1=C\cap \kr M\alpha=C\cap \kr G\alpha$. 
Then $C_1$ is normal in $G_\alpha$. Hence 
$C_1=(C_s)^{k_1}$ with some $s|{q+1}$ by Claim~\ref{dihnorms}. 
Thus $C/C_1=C/(C\cap \kr G\alpha)=
C\kr G\alpha/\kr G\alpha$ can be considered as a normal subgroup of
$G_\alpha^{\Gamma(\alpha)}$. If $C_1\neq C$, then this is
a non-trivial normal subgroup of $G_\alpha^{\Gamma(\alpha)}$ and hence it contains
a minimal normal subgroup of the form $V=(C_r)^{k_1}$ with some prime
divisor $r$ of $q+1$. On the other hand, since
$G_\alpha$ permutes the $k_1$ factors of $C$, $G_\alpha$ induces a monomial
subgroup on $(C_r)^k$, and hence $G_\alpha$ cannot be transitive
on the set of non-trivial elements of $V$ (see the remark above). 
This is a contradiction, and 
hence $C_1=C$, which means that $C\leq \kr M\alpha$.
\end{proof}

Let $\beta\in\Gamma(\alpha)$. By Claim~\ref{claim2}, 
 $C\leq M_\beta$. Since $\kr M\beta$ is $G$-conjugate to $\kr M\alpha$, 
it contains the unique Hall $2'$-subgroup of $M_\beta$. 
On the other hand, this Hall
$2'$-subgroup is equal to $C$, and hence $C\leq \kr M\beta$, and
so $C\leq M_\delta$ for all $\delta\in\Gamma(\beta)$. 
Then the same argument implies that if $\delta\in\Gamma(\beta)$, then
$C\leq \kr M\delta$. Since $\Gamma$ is assumed to be connected, this implies
that $C$ acts trivially on the vertex set $\Omega$ of $\Gamma$, which is
impossible, as $G$ is assumed to be a subgroup of $\sym\Omega$.

This shows that the assumption that either $G\pi$ is intransitive
or that the inclusion $G\leq W$ is one of the types $\cd {1S}$, $\cd{2\sim}$,
or $\cd 3$ always leads to a contradiction. Hence the inclusion $G\leq W$ has
to have type $\cd {2\not\sim}$. 
\end{proof}

Now Theorem~\ref{main} is a consequence of 
Proposition~\ref{lem6}, Corollary~\ref{nonnormcor}, 
Theorem~\ref{simpleplinth} and Theorem~\ref{nonsimple}.

\section{The Li--Seress examples}\label{sect:ls}

By Theorem~\ref{main}, if $\Gamma$ is a connected 
$(G,2)$-arc-transitive
graph for an innately transitive permutation group $G$ with a 
non-regular, non-simple plinth, then
an inclusion $G\leq W$ into a wreath product $W$ in product action 
as in Hypothesis~\ref{hyp1} has to have
type $\cd{2\not\sim}$. 
On the other hand, we 
do not know if such $(G,2)$-arc-transitive graphs exist.
The information about the finite 
simple factor $T$ of the plinth and its factorization given in
Lemma~\ref{notmax} is not as strong in this case as in the 
cases described by Theorem~\ref{c2}.
Thus  deciding if such examples exist seems to be a difficult problem. 
By Theorem~\ref{main}, 
the O'Nan--Scott type of a quasiprimitive group $G$ in an example
must be {\sc Pa}. 
We show in this section that the 
examples of $(G,2)$-arc-transitive graphs
with quasiprimitive groups of type {\sc Pa}
in~\cite{MR2258005} do not admit an inclusion $G\leq W$ 
of type $\cd{2\not\sim}$. 

In all the examples of $(G,2)$-arc-transitive graphs in~\cite{MR2258005}, 
$G$ is a quasiprimitive group with socle $M=T^k$ where $T\cong\psl 2q$ 
with some $q$. Thus to rule out inclusions of type $\cd{2\not\sim}$ 
for these groups, we need to review some known facts about
factorizations of $\psl 2q$. The following lemma is a 
consequence of~\cite[Tables~1 and 3]{lps:max} 
and Dickson's description of the maximal subgroups of 
$\psl 2q$ (see~\cite{giudici} for an easy reference).

\begin{lemma}\label{factpsl}
Suppose that $T=\psl 2q$ with some prime power $q\geq 4$.
\begin{enumerate}
\item If $q\equiv 1\pmod 4$ and $q\not\in\{5,9,29\}$, 
then $T$ does not have a factorization $T=AB$ with proper 
subgroups $A,\ B<T$.
\item If $q\not\equiv 1\pmod 4$, then $T$ can be factorized as
$T=AB$, where $A$ 
is a maximal parabolic subgroup and
$$
B=\left\{\begin{array}{ll}
\dih{(q+1)} & \mbox{when $q$ is even;}\\ 
\dih{(q+1)/2} & \mbox{when $q$ is odd.}\end{array}\right.  
$$
Further,
$$
|A\cap B|=\left\{\begin{array}{ll}
2 & \mbox{when $q$ is even;}\\ 
1 & \mbox{when $q$ is odd.}\end{array}\right.
$$
\item If $q\not\equiv 1\pmod 4$ and $q\not\in\{7,11,19,23,59\}$, then the
only factorization $T=AB$ of $T$ with maximal subgroups $A,\ B$ 
is as in~(2).
\item If $q\in\{5,7,9,11,19,23,29,59\}$, then 
the factorizations $\psl 2q=AB$ with maximal subgroups $A$ and $B$ are either listed in part~(2) or in 
Table~\ref{excfact}, where $P_1$ denotes a maximal parabolic 
subgroup in $T$.
\end{enumerate}
\end{lemma}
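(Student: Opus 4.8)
The plan is to establish Lemma~\ref{factpsl} as a bookkeeping exercise that organizes the known classification of factorizations of $\psl 2q$ into the four stated regimes. The underlying mathematical content is entirely contained in the cited sources: Dickson's list of maximal subgroups of $\psl 2q$ (conveniently recorded in~\cite{giudici}) and the Liebeck--Praeger--Saxl tables~\cite[Tables~1 and~3]{lps:max} that classify maximal factorizations of almost simple groups. My task is to extract from these the statements organized by congruence class of $q$ modulo~$4$. I would first recall Dickson's list: the maximal subgroups of $T=\psl 2q$ are, generically, a Borel (maximal parabolic) subgroup $P_1$ of order $q(q-1)/\gcd(2,q-1)$, two dihedral subgroups of orders $\dih{(q-1)/\gcd(2,q-1)}$ and $\dih{(q+1)/\gcd(2,q-1)}$, together with finitely many exceptional subgroups isomorphic to $\alt 4$, $\sy 4$, $\alt 5$, and subfield subgroups $\psl 2{q_0}$ or $\pgl 2{q_0}$.

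For part~(1), I would invoke~\cite[Tables~1 and~3]{lps:max}: a factorization $T=AB$ forces a factorization with maximal $A,B$, and inspecting the tables one sees that for $q\equiv 1\pmod 4$ the only candidate factorizations involve the small exceptional cases $q\in\{5,9,29\}$; excluding these leaves no factorization, which is the assertion. For part~(2), I would verify the order identity directly from Dickson's list: taking $A=P_1$ and $B$ the dihedral subgroup of order $\dih{(q+1)}$ (even $q$) or $\dih{(q+1)/2}$ (odd $q$, using $q\not\equiv 1\pmod 4$ so that $4\mid q+1$), a short computation of $|A|\,|B|/|T|$ shows $|A\cap B|$ equals $2$ or $1$ respectively, and hence $|A|\,|B|=|T|\,|A\cap B|$ certifies $T=AB$. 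Part~(3) again follows from the LPS tables: the generic maximal factorization is the parabolic-times-dihedral one of~(2), and all other maximal factorizations occur only for $q$ in the listed exceptional set $\{7,11,19,23,59\}$. Part~(4) is then a finite check: for each $q\in\{5,7,9,11,19,23,29,59\}$ I would enumerate, using the maximal subgroup orders from Dickson and the divisibility condition $|T|\bigm| |A|\,|B|$, exactly which pairs of maximal subgroups yield a factorization, and record the non-generic ones in Table~\ref{excfact}.

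The only genuine obstacle is that Lemma~\ref{factpsl} as stated does not display Table~\ref{excfact}, so part~(4) cannot be fully verified without reproducing that enumeration; in the actual proof I expect the authors simply to cite~\cite{lps:max} and~\cite{giudici} and assert that the finitely many exceptional factorizations are as tabulated, the verification being a routine but tedious order comparison for each of the eight values of $q$. Accordingly I would keep the argument at the level of citing the structural results and performing the one explicit order computation needed for part~(2), treating parts~(1), (3) and~(4) as immediate consequences of the classification tables together with elementary arithmetic on subgroup orders. The main subtlety to watch is the dependence on the parity of $q$ through the factor $\gcd(2,q-1)$ in the dihedral orders, which is exactly what makes the two cases in part~(2) behave differently.
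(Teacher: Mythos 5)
Your proposal matches the paper's treatment: the paper gives no proof of this lemma at all, simply asserting it as ``a consequence of~\cite[Tables~1 and 3]{lps:max} and Dickson's description of the maximal subgroups of $\psl 2q$ (see~\cite{giudici})'', which is exactly the citation-plus-bookkeeping route you outline. Your explicit order computation for part~(2) and the finite enumeration for part~(4) supply more detail than the paper itself records, and are consistent with it.
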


\begin{center}
\begin{table}
$$
  \begin{array}{|c|c|c|c|c|c|c|c|c|c|c|}
\hline 
q & 5 & 7 & 9 & 9 & 9 &11 & 19 & 23 & 29 & 59  \\
\hline
A & P_1 & P_1 & \sy 4 & P_1 & \alt 5 & P_1 & P_1 & P_1 & P_1 & P_1\\
\hline
B & \alt 4 & \sy 4 & \alt 5 & \alt 5 & \alt 5 & \alt 5 & \alt 5 & \sy 4 & \alt 5 & \alt 5\\
\hline
A\cap B & C_2 & C_3 & C_2\times C_2 &\sy 3 & \dih 5 & C_5 & C_3 & 1 & C_2 & 1\\
\hline
\end{array}
$$
\caption{Exceptional factorizations of $\psl 2q$}
\label{excfact}
\end{table}
\end{center}

The information about the factorizations of $\psl 2q$ in Lemma~\ref{factpsl}
allows us to characterize inclusions $G\leq W$ in wreath products in 
product action of innately transitive groups $G$ whose plinths are
isomorphic to $\psl 2q^k$.

\begin{theorem}\label{factpsl2}
Suppose that $G$ is an innately transitive group on 
$\Omega$ whose plinth $M$ is isomorphic 
to $\psl 2q^k$  with some prime power $q\geq 4$ and $k\geq 1$. Let
$T$ be a simple factor of $M$, set $\overline T=\cent MT$ and 
let $\omega\in\Omega$.
\begin{enumerate}
\item If $q\equiv 1\pmod 4$ and $q\not\in\{5,9,29\}$, then 
every inclusion of $G\leq W$ satisfying~Hypothesis~\ref{hyp1} is normal.
\item Suppose $q\equiv 3\pmod 4$ and $q\not\in\{7,11,19\}$ and assume that $G$ admits
a non-normal inclusion $G\leq W$ satisfying~Hypothesis~\ref{hyp1}. 
Then $M$ is regular and
$G$ admits a normal inclusion $G\leq \sym\Xi\wr\sy k$ where $|\Xi|=|T|$.
\item If $q$ is even and $G\leq W$ is a non-normal 
inclusion such that Hypothesis~\ref{hyp1} holds, then $M_\omega\overline T/\overline T\leq C_2$.
\item If $q\in \{5,7,9,11,19,29\}$
and $G\leq W$ is a non-normal 
inclusion satisfying~Hypothesis~\ref{hyp1}, then either $M_\omega\overline T/\overline T=1$ 
and $M$ is regular or
$M_\omega\overline T/\overline T$ 
is a non-trivial subgroup of the $A\cap B$ column of a corresponding 
column of Table~$\ref{excfact}$.
\end{enumerate}
\end{theorem}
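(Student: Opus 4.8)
The plan is to determine, for each congruence class of $q$, which inclusion types of Section~\ref{sect:transcd} can occur, and then to read off $M_\omega\overline T/\overline T$ from the factorization data of Lemma~\ref{factpsl}. The engine is Lemma~\ref{notmax}: if $T$ lies in exactly $s$ of the components $\comp Mj$ and $A_r=(\comp M{j_r})_\delta\overline T/\overline T$ are the corresponding coordinate projections, then $A_r\bigl(\bigcap_{i\neq r}A_i\bigr)=T$ for every $r$ and $M_\omega\overline T/\overline T\leq\bigcap_{r\leq s}A_r$. In particular, for $s=2$ this yields a factorization $T=A_1A_2$ together with the bound $M_\omega\overline T/\overline T\leq A_1\cap A_2$, and it is exactly $A_1\cap A_2$ that Lemma~\ref{factpsl} controls.

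First I would fix transitivity of $G\pi$. Since $T\cong\psl2q$ coincides with an entry of columns~1--5 of Table~\ref{c2table} only through the isomorphism $\psl29\cong\alt6$ (column~1), Theorem~\ref{c2}(1) forces $G\pi$ to be transitive on $\underline\ell$ whenever $q\neq9$, which covers all of parts~(1)--(3). With $G\pi$ transitive and the inclusion non-normal, Theorem~\ref{types}(2) gives $s\in\{2,3\}$; and as $\psl2q$ (for $q\neq9$) lies in neither columns~1--4 nor columns~6--8 of Table~\ref{c2table}, Theorem~\ref{c2}(2)--(3) rules out the types $\cd{1S}$, $\cd{2\sim}$ and $\cd3$. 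Hence $s=2$, the inclusion has type $\cd{2\not\sim}$, and $T=A_1A_2$ with both factors proper and $M_\omega\overline T/\overline T\leq A_1\cap A_2$.

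Parts~(1)--(3) now follow by enlarging $A_1,A_2$ to maximal subgroups $A_1',A_2'$ with $T=A_1'A_2'$ and applying Lemma~\ref{factpsl}. In part~(1) no proper factorization of $T$ exists, so $s=2$ is impossible and the inclusion is normal. In part~(2) every maximal factorization---the generic one of Lemma~\ref{factpsl}(2)--(3) and the two exceptional ones at $q=23,59$ in Table~\ref{excfact}---has $A_1'\cap A_2'=1$, so $M_\omega\overline T/\overline T=1$. Transitivity of $G\pi$ makes $G$ permute the $T_i$ transitively, so all projections of $M_\omega$ are conjugate and a single trivial one forces $M_\omega\leq\bigcap_i\overline{T_i}=1$; thus $M$ is regular, and Theorem~\ref{constrnormal} applied to the $G$-invariant decomposition $\X=\{T_1,\ldots,T_k\}$ (for which $M_\omega=1$ is an $\X$-subgroup) delivers the promised normal inclusion $G\leq\sym\Xi\wr\sy k$ with $|\Xi|=|T|$. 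In part~(3) the unique maximal factorization has $|A_1'\cap A_2'|=2$, so $M_\omega\overline T/\overline T\leq C_2$.

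For part~(4) the same reduction settles $q\in\{5,7,11,19,29\}$: the inclusion is again $\cd{2\not\sim}$, and a maximal factorization together with Lemma~\ref{factpsl}(4) places $A_1\cap A_2$ either trivial (so $M$ is regular, exactly as above) or inside one of the $A\cap B$ entries of the corresponding column of Table~\ref{excfact}. The delicate case is $q=9$, where $T\cong\alt6$ occupies column~1 of Table~\ref{c2table}: here $G\pi$ need not a priori be transitive, and the transitive inclusion could have type $\cd{1S}$ or $\cd{2\sim}$. I expect this to be the main obstacle. The governing fact is that $\alt6$ has no subgroup isomorphic to a dihedral group of order~$20$, so whenever Theorem~\ref{c2}(4) would force a coordinate of $M_\omega$ (taken in $\S=\{T_1,\ldots,T_k\}$) to equal such a group---namely in the two-orbit situation and in type $\cd{2\sim}$---the constraint is void. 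In the surviving type $\cd{1S}$, $\S$ pairs the simple factors as $T_{2i-1}\times T_{2i}\cong\alt6\times\alt6$, and the order-$20$ dihedral subgroup in each pair projects onto a subgroup of the Sylow-$5$ normalizer $\dih5\leq\alt6$ in each coordinate. Since $\dih5=\alt5\cap\alt5$ is precisely the intersection recorded for $\alt6=\alt5\cdot\alt5$ in the $q=9$ columns of Table~\ref{excfact}, we again land in the $A\cap B$ data and obtain $M_\omega\overline T/\overline T\leq\dih5$. Nailing down these projections, and confirming that no factorization of $\alt6$ yields an intersection outside the $q=9$ rows of Table~\ref{excfact}, is where the argument is least routine.
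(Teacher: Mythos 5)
Your handling of parts (1)--(3), and of part (4) for $q\in\{5,7,11,19,29\}$, is correct and is essentially the paper's own argument: use Theorem~\ref{c2} to force $G\pi$ transitive and the inclusion type $\cd{2\not\sim}$ once $q\neq 9$, extract the proper factorization $T=A_1A_2$ and the bound $M_\omega\overline T/\overline T\leq A_1\cap A_2$ from Lemma~\ref{notmax}, and then run Lemma~\ref{factpsl} through the congruence classes; the regularity argument followed by Theorem~\ref{constrnormal} in part (2) is also exactly the paper's step. (Your explicit passage to maximal overgroups and your explicit check of $q=23,59$ make precise what the paper cites tersely.)

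The genuine gap is your treatment of $q=9$. You dismiss the two-orbit situation and type $\cd{2\sim}$ as ``void'' because $\alt 6$ has no dihedral subgroup of order $20$. That deduction takes the entry $\dih{10}$ in column~1 of Table~\ref{c2table} literally under the paper's convention that $\dih a$ has order $2a$; but this reading cannot be what Theorem~\ref{c2}(4) asserts, since it is contradicted by the paper's own Section~\ref{sect:simpleplinth}: the inclusion $\alt 6\leq \sym\Delta\wr\sy 2$ on $6^2$ points underlying Sylvester's graph satisfies Hypothesis~\ref{hyp1} with $k=1$, is of type $\cd{2\sim}$ when $G\pi$ is transitive (e.g.\ $G=\aut{\alt 6}$) and is of the two-orbit kind when $G=\sy 6$, and in both cases the point stabilizer is the Sylow $5$-normalizer of order $10$, namely $\dih 5=\alt 5\cap\alt 5$ for the factorization $\alt 6=\alt 5\alt 5$ --- exactly the $q=9$ entry of Table~\ref{excfact}. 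So the two cases you discard are precisely the live ones for $q=9$, and your proof never bounds $M_\omega\overline T/\overline T$ in them; your final conclusion happens to be the true one, but the cases it must cover are asserted away rather than proved. Your fallback argument for $\cd{1S}$ is also unsound as stated: an abstract dihedral group of order $20$ inside $\alt 6\times\alt 6$ need not project into a Sylow $5$-normalizer --- it can project onto a Klein four group, which by Lagrange lies in no subgroup of order $10$ --- so the containment $M_\omega\overline T/\overline T\leq \dih 5$ requires the diagonal structure supplied by the $\cd{1S}$ analysis, not just the isomorphism type of the $\S$-subgroup. The paper disposes of all three subcases in one line: for $T\cong\psl 29\cong\alt 6$, Theorem~\ref{c2}(2) and (4) give $M_\omega\overline T/\overline T\cong\dih 5$, which appears in the $A\cap B$ column of Table~\ref{excfact}, yielding part (4).
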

\begin{proof}
Suppose that $G$ is an innately transitive group as in the conditions of the 
theorem and that $G\leq W$ is an inclusion satisfying Hypothesis~\ref{hyp1}.
Let $\pi:W\rightarrow\sy\ell$ be the natural projection. 
By Theorem~\ref{c2}(1),  either $G\pi$ is
transitive, or $T\cong\psl 29\cong\alt 6$.
Further, if $G\pi$ is transitive, but the inclusion is not normal, 
then, by Theorem~\ref{c2}(2)--(3), either $T\cong\psl 29\cong\alt 6$, or
the inclusion type is $\cd{2\not\sim}$. 

Let first consider the case when the inclusion type is not 
$\cd{2\not\sim}$. Hence $T\cong\psl 29\cong\alt 6$ and either 
$G\pi$ 
is intransitive, or  the inclusion is one of the types $\cd{2\sim}$
or $\cd{1S}$. Then, by Theorem~\ref{c2}(2), 
$M_\omega\overline T/\overline T\cong \dih 5$. 
Since $\dih 5$ appears in the $A\cap B$ column of Table~\ref{excfact}, 
the result holds in this case.

Let us consider now the case of $\cd{2\not\sim}$ inclusions.
In this case, $T$ is contained in exactly 
two components $\comp M{j_1}$ and 
$\comp M{j_2}$. Let $\delta\in\Delta$ and set $\omega=(\delta,\ldots,\delta)$. 
Then, by Lemma~\ref{notmax}, 
$A=(\comp M{j_1})_\delta\overline T/\overline T$  and 
$B=(\comp M{j_2})_\delta\overline T/\overline T$  are proper subgroups of 
$T$ satisfying $T=AB$ and $M_\omega\overline T/\overline T\leq A\cap B$.
If $q\equiv 1\pmod 4$ and $q\not\in\{5,9,29\}$, then,
by Lemma~\ref{factpsl}(1), $T=\psl 2q$, does 
not admit a factorization with two proper subgroups, and hence,
in this case,
inclusions of type $\cd{2\not\sim}$ are not possible. This proves 
claim~(1). If $q\equiv 3\pmod 4$, but $q\not\in\{7,11,19\}$ and
$T=AB$ with proper subgroups $A$ and $B$ of $T$, then, by 
Lemma~\ref{factpsl}(3)--(4), $A\cap B=1$. Thus, the projection 
$M_\omega\overline T/\overline T$ is trivial. Since this projection is 
independent of the choice of $T$, $M_\omega=1$ and hence $M$ is regular. Then
$M_\omega$ satisfies the conditions in Theorem~2.6 with the 
direct product decomposition of $M=T^k$ into $k$ simple factors, and hence 
$G$ can be embedded into $\sym\Xi\wr\sy k$ such that $\Xi$ is the 
right coset space of the trivial subgroup in $T$. This proves~(2).

Finally, if $q$ is even or $q\in \{5,7,9,11,19,29\}$ and $G\leq W$ is an 
inclusion of type $\cd{2\not\sim}$, then 
$M_\omega\overline T/\overline T\leq A\cap B$, where $A,\ B$ are the subgroups
defined above. If $q$ is even, then, by Lemma~\ref{factpsl}(2), 
$A\cap B\cong C_2$, and so~(3) follows.
If $q$ is one of the odd prime-powers, listed above, then
either $A\cap B=1$, and so $M$ is regular, or $A$ and $B$ must occur in Table~\ref{excfact}, and so 
claim~(4) follows also.
\end{proof}

\begin{corollary} \label{pslcor}
Suppose that Hypothesis~\ref{hyp2} holds and that $M$ is non-regular and is isomorphic to 
$\psl 2q^k$ where $k\geq 1$, $q\geq 4$ is an odd prime-power and 
$q\not\in\{5,7,9,11,19,29\}$. Then $\Gamma$ is not $(G,2)$-arc-transitive.
\end{corollary}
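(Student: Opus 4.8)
The plan is to assume, for contradiction, that $\Gamma$ is $(G,2)$-arc-transitive and then to derive a contradiction by treating separately the two mutually exclusive possibilities for the inclusion $G\leq W$: that it is normal, and that it is not. Under Hypothesis~\ref{hyp2} the group $G$ is innately transitive with plinth $M\cong\psl 2q^k$, and by assumption $M$ is non-regular; since $q\geq 4$, each simple direct factor $\psl 2q$ is non-abelian, so $M$ is a non-abelian, non-regular plinth and the full machinery of Section~\ref{sect:cs} is available. The underlying point is that the excluded set $\{5,7,9,11,19,29\}$ has been chosen precisely so that, whatever the residue of $q$ modulo $4$, the relevant clause of Theorem~\ref{factpsl2} forbids a non-normal inclusion with non-regular plinth. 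Note that this uniform dichotomy dispenses with any need to distinguish the cases $k=1$ and $k\geq 2$.

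First I would dispose of the normal case. If the inclusion $G\leq W$ is normal, then, because $M$ is non-regular, Corollary~\ref{nonnormcor} applies verbatim and yields that $\Gamma$ is \emph{not} $(G,2)$-arc-transitive, contrary to our standing assumption. Hence the inclusion $G\leq W$ must be non-normal, and the entire problem reduces to ruling out non-normal inclusions for the plinths in question.

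For the non-normal case I would split according to $q\bmod 4$, recalling that $q$ is odd. If $q\equiv 1\pmod 4$, then $q\notin\{5,7,9,11,19,29\}$ forces in particular $q\notin\{5,9,29\}$, so Theorem~\ref{factpsl2}(1) asserts that \emph{every} inclusion satisfying Hypothesis~\ref{hyp1} is normal, directly contradicting non-normality. If instead $q\equiv 3\pmod 4$, then $q\notin\{7,11,19\}$, so Theorem~\ref{factpsl2}(2) applies to our non-normal inclusion and forces $M$ to be regular, contradicting the hypothesis that $M$ is non-regular. In either case we reach a contradiction, so $\Gamma$ cannot be $(G,2)$-arc-transitive, as required.

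I do not anticipate any genuine obstacle: the substantive content—the classification of factorizations $\psl 2q=AB$ in Lemma~\ref{factpsl} and its translation into constraints on inclusion types in Theorem~\ref{factpsl2}—is already established, and the corollary is in essence a bookkeeping consequence of combining it with Corollary~\ref{nonnormcor}. The one point demanding a little care is confirming that the exceptional sets line up: one must check that removing $\{5,7,9,11,19,29\}$ simultaneously removes $\{5,9,29\}$ (the exceptions in the $q\equiv 1$ clause) and $\{7,11,19\}$ (those in the $q\equiv 3$ clause), which it does exactly. It is also worth remarking that the hypothesis that $q$ be odd is essential to this argument: for even $q$, Theorem~\ref{factpsl2}(3) only bounds $M_\omega\overline T/\overline T$ by $C_2$ rather than precluding the inclusion, so the same reasoning does not close off the even case.
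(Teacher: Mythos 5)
Your proof is correct and follows the paper's overall strategy---the same normal/non-normal dichotomy, with Corollary~\ref{nonnormcor} disposing of the normal case and Theorem~\ref{factpsl2} of the non-normal case---but you close the non-normal case by a more direct route than the paper does. The paper re-derives the inclusion type there: since $q\neq 9$ gives $T\not\cong\alt 6$, Theorem~\ref{c2} forces $G\pi$ to be transitive with inclusion type $\cd{2\not\sim}$; then Lemma~\ref{notmax} produces a factorization $T=AB$ with proper subgroups, Lemma~\ref{factpsl} rules out $q\equiv 1\pmod 4$, and finally the \emph{second} conclusion of Theorem~\ref{factpsl2}(2)---the auxiliary normal inclusion $G\leq\sym\Xi\wr\sy k$---is fed into Proposition~\ref{grnormal}. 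You instead split by $q\bmod 4$ and quote Theorem~\ref{factpsl2}(1) for $q\equiv 1\pmod 4$ (every inclusion is normal, contradicting non-normality) and the \emph{first} conclusion of Theorem~\ref{factpsl2}(2) for $q\equiv 3\pmod 4$ ($M$ is regular, contradicting the corollary's non-regularity hypothesis). Your route is shorter: it never needs Theorem~\ref{c2}, Lemma~\ref{notmax}, or Proposition~\ref{grnormal}, and it makes explicit that under the non-regularity hypothesis the non-normal case is simply vacuous---a point the paper leaves unexploited, at the small cost that applying Proposition~\ref{grnormal} tacitly requires the $M$-arc-transitivity of $\Gamma$ (available via Lemma~\ref{marctrans}). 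What the paper's longer route buys is that it exhibits the underlying mechanism (the type $\cd{2\not\sim}$ and the factorization $T=AB$) rather than treating Theorem~\ref{factpsl2} as a black box, and this is the template reused immediately afterwards in the proof of Corollary~\ref{cor:ls}. Your check that the excluded set $\{5,7,9,11,19,29\}$ splits by residue into $\{5,9,29\}$ and $\{7,11,19\}$, covering both clauses, is exactly right.
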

\begin{proof}
Suppose that $G$, $W$ and $\Gamma$ are as in Hypothesis~\ref{hyp2}. 
If $G\leq W$ is a normal inclusion, then 
$\Gamma$ is not $(G,2)$-arc-transitive by Corollary~\ref{nonnormcor}. Since $T\not\cong\psl 29\cong \alt 6$, 
Theorem~\ref{c2} implies that $G\pi$ is transitive and the type of the inclusion is $\cd{2\not\sim}$.
Thus, by Lemma~\ref{notmax}, $T=\psl 2q$ must admit a factorization $T=AB$ with proper subgroups 
$A$ and $B$. Thus the conditions of Lemma~\ref{factpsl}(3) must be valid with $q$ odd. Hence 
by Theorem~\ref{factpsl2}(2), there exists a normal inclusion $G\leq \sym\Xi\wr\sy k$. Therefore 
Proposition~\ref{grnormal} implies that $\Gamma$ is not $(G,2)$-arc-transitive.
\end{proof}

Finally, we show that the Li--Seress examples cannot be embedded into 
wreath products in product action.

\begin{corollary}\label{cor:ls}
Suppose that $\Gamma$ is one of the $(G,2)$-arc-transitive graphs in 
Examples~3.2, 3.3, 3.4, 4.1, or 5.3 in~\cite{MR2258005}. Then 
$G$ cannot be embedded into a wreath product $W$ as in~Hypothesis~\ref{hyp1}.
\end{corollary}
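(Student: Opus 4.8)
The plan is to handle all five families simultaneously by feeding their common structure into the machinery assembled above, reducing each to an application of Corollary~\ref{pslcor} or to a short explicit verification. First I would extract from~\cite{MR2258005} the data we need: in each of Examples~3.2, 3.3, 3.4, 4.1 and~5.3 the group $G$ is quasiprimitive of type {\sc Pa} with plinth $M=\soc G\cong\psl 2q^k$ for a specific prime power $q$ and some $k\geq 2$, and the point stabilizer $M_\omega$ is the one displayed there. Because the type is {\sc Pa}, the plinth $M$ is non-regular, and since $k\geq 2$ it is both non-abelian and non-simple; thus the standing hypotheses of the relevant theorems are met.

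I would then argue by contradiction. Suppose $G$ embeds into some $W=\sym\Delta\wr\sy\ell$ acting in product action as in Hypothesis~\ref{hyp1}. Taking $\Gamma$ to be the given graph, we have $G\leq\aut\Gamma$, so Hypothesis~\ref{hyp2} holds. As $\Gamma$ is $(G,2)$-arc-transitive while $M$ is non-regular and non-simple, Theorem~\ref{nonsimple} forces $G\pi$ to be transitive on $\underline\ell$ and the inclusion $G\leq W$ to be of type $\cd{2\not\sim}$. By Lemma~\ref{notmax} the simple factor $T=\psl 2q$ then factorizes as $T=AB$ with proper subgroups $A,B<T$, and the projected stabilizer satisfies $M_\omega\overline T/\overline T\leq A\cap B$; moreover, being of type $\cd{2\not\sim}$ means by definition that $A\not\cong B$.

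The argument now splits on $q$, organized by Lemma~\ref{factpsl}. For every family with $q$ odd and $q\notin\{5,7,9,11,19,29\}$, Corollary~\ref{pslcor} applies directly and shows that $\Gamma$ is not $(G,2)$-arc-transitive, contradicting the hypothesis. For the remaining families, where $q$ is even or $q\in\{5,7,9,11,19,29\}$, I would invoke Theorem~\ref{factpsl2}(3)--(4): the projection $M_\omega\overline T/\overline T$ is then severely restricted---it is trivial (forcing $M$ regular, which contradicts type {\sc Pa}), isomorphic to $C_2$ when $q$ is even, or a subgroup of one of the groups in the $A\cap B$ column of Table~\ref{excfact} coming from a row with $A\not\cong B$. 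Comparing this bound with the explicit $M_\omega$ recorded in the first step, one finds in each case that the true projected stabilizer is strictly larger than any admissible $A\cap B$, so no inclusion of type $\cd{2\not\sim}$ can exist. This final contradiction establishes the corollary.

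The main obstacle is precisely this last comparison for the exceptional values $q\in\{5,7,9,11,19,29\}$, where $\psl 2q$ genuinely admits factorizations with non-trivial intersection and one cannot rely on a blanket non-factorizability result. Here the argument is necessarily computational: one reads off the concrete structure of $M_\omega$ from~\cite{MR2258005}, discards the rows of Table~\ref{excfact} with $A\cong B$ (these would give type $\cd{2\sim}$, already excluded), and checks case by case that $M_\omega\overline T/\overline T$ does not embed into any surviving $A\cap B$. Carrying out this finite verification for each listed example completes the proof.
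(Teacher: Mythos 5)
Your proposal follows essentially the same route as the paper's proof: reduce to an inclusion of type $\cd{2\not\sim}$, extract from Lemma~\ref{notmax} a factorization $T=AB$ with proper subgroups and $M_\omega\overline T/\overline T\leq A\cap B$, and then play the explicit projections of the Li--Seress examples (Table~\ref{listable}) against the factorization data for $\psl 2q$ (Corollary~\ref{pslcor}, Theorem~\ref{factpsl2}, Table~\ref{excfact}). The only routing difference is how you reach type $\cd{2\not\sim}$: you invoke Theorem~\ref{nonsimple} directly, whereas the paper first applies Corollary~\ref{pslcor} to force $q\in\{5,7,9,11,19,29\}$ and then eliminates the intransitive, $\cd{1S}$, $\cd{2\sim}$ and $\cd 3$ possibilities via Theorem~\ref{c2} and Table~\ref{listable}. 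Your shortcut is legitimate (the examples have non-regular, non-abelian, non-simple plinth, so Theorem~\ref{nonsimple} applies) and slightly cleaner; it also covers even $q$ through Theorem~\ref{factpsl2}(3), a case that the paper's appeal to Corollary~\ref{pslcor} (which requires $q$ odd) passes over.

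One step of your argument is not sound as stated: discarding the rows of Table~\ref{excfact} with $A\cong B$ on the grounds that they ``would give type $\cd{2\sim}$''. The inclusion type is defined by the actual component-stabilizer projections, whereas Table~\ref{excfact} lists factorizations by \emph{maximal} subgroups; the containment $M_\omega\overline T/\overline T\leq A\cap B$ only passes upward to maximal overgroups $A'\supseteq A$, $B'\supseteq B$, and non-isomorphic proper $A$, $B$ can sit inside isomorphic maximal $A'$, $B'$. So a row with $A'\cong B'$ cannot be eliminated by the type alone. Fortunately the discard is also unnecessary: the unique such row ($q=9$, $A\cong B\cong\alt 5$, $A\cap B\cong\dih 5$) is already killed for the examples whose conditions admit $q=9$, since neither a maximal parabolic of order $36$ nor $\sy 3$ embeds in $\dih 5$. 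Be aware, finally, that the genuinely delicate entry in the closing verification is the row $(q,A,B)=(9,P_1,\alt 5)$ with $A\cap B\cong\sy 3$: for Example~5.3 the projection is $\sy 3$, so your blanket claim that the projection is ``strictly larger than any admissible $A\cap B$'' fails at that entry, and one needs the precise conditions of the Li--Seress example to exclude $q=9$ --- a point on which the paper's own table comparison relies just as much as yours does.
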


\begin{center}
\begin{table}
$$
\begin{array}{|c|c|c|}
\hline
\mbox{Example} & \mbox{conditions for $q$} & M_\omega \overline T/\overline T \\
\hline
\mbox{Example 3.2}  & \mbox{$q$ is prime, $q\equiv \pm 1\pmod{16}$} & \sy 4\\
\hline
\mbox{Example 3.3} & \mbox{any} & \mbox{maximal parabolic}\\
\hline
\mbox{Example 3.4} & \mbox{$q$ is prime, $q\equiv \pm 1\pmod{16}$} & \sy 4\\
\hline
\mbox{Example 4.1} & \mbox{$q$ is prime, $q\equiv \pm 1\pmod{5}$} & \dih 5\\ 
\hline
\mbox{Example 5.3} & q\equiv \pm 1\pmod{8} & \sy 3\\
\hline
\end{array}
$$
\caption{The Li--Seress Examples}
\label{listable}
\end{table}
\end{center}

\begin{proof}
Suppose that $\Gamma$ is one of the $(G,2)$-arc-transitive examples
in~\cite{MR2258005} for a quasi\-primitive group $G$ of O'Nan--Scott type
{\sc Pa}. In each of the cases, 
$\soc G=M=T^k$ where $T\cong\psl 2q$ with some prime-power $q$. 
Let $\omega$ be a vertex of $\Gamma$.
The conditions for $q$ and for the projection 
$M_\omega\overline T/\overline T$ are 
in Table~\ref{listable}.
Suppose that $G\leq W$ is an inclusion as in~Hypothesis~\ref{hyp1}. 
By Corollary~\ref{pslcor}, $q\in\{5,7,9,11,19,29\}$. By Theorem~\ref{c2}, 
if $G\pi$ is intransitive or the type of the inclusion is $\cd{1S}$ or 
$\cd{2\sim}$, then $T=\alt 6\cong\psl 29$
and $M_\omega\overline T/\overline T\cong \dih 5$. However, this is 
not possible by Table~\ref{listable}. Inclusions of type $\cd 3$ 
are not possible by Theorem~\ref{c2}(3).
Thus the inclusion $G\leq W$ has type 
$\cd{2\not\sim}$. In this case, Lemma~\ref{notmax} implies  that $T=\psl 2q$
admits a factorization $T=AB$ with proper subgroups such that 
$M_\omega\overline T/\overline T\leq A\cap B$. Comparing the possibilities 
in Tables~\ref{excfact} and~\ref{listable} we find that this is impossible.
Hence no inclusion $G\leq W$ satisfying~Hypothesis~\ref{hyp1} is  possible.
\end{proof}

\newcommand{\etalchar}[1]{$^{#1}$}
\def\cprime{$'$}

\end{document}